\numberwithin{equation}{section}
\newtheorem{remark}{Remark}
\newtheorem{theorem}{Theorem}[section]
\newtheorem{lemma}{Lemma}[section]
\numberwithin{equation}{section}
\newcommand{\R}{\mathbb R}
\begin{document}

\title{The nonlinear Quadratic Interactions of the Schr\"odinger type on the half-line}

\author{Isnaldo Isaac Barbosa and Márcio Cavalcante\\ \footnotesize{Instituto de Matemática}\\
	\footnotesize{Universidade Federal de Alagoas}\\
	\footnotesize{Maceió, Alagoas}\\
	\footnotesize{isnaldo@pos.mat.ufal.br and marcio.melo@im.ufal.br}}

\maketitle

\begin{abstract}
	 In this work we study the initial boundary value problem associated with the  coupled Schr\"odinger equations {with quadratic nonlinearities, that appears in nonlinear optics}, on the half-line. We obtain local well-posedness for data {in Sobolev spaces} with low regularity, by using a forcing problem on the full line with a presence of a forcing term in order to apply the Fourier restriction method of Bourgain. The crucial point in this work is the new bilinear estimates on the classical Bourgain spaces $X^{s,b}$ with $b<\frac12$, jointly with bilinear estimates in adapted Bourgain spaces that will used to treat the traces of nonlinear part of the solution. Here the understanding of the dispersion relation is the key point in these estimates, where the set of regularity depends strongly of the constant $a$ measures the scaling-diffraction magnitude indices.  
\end{abstract}


\section{Introduction}

In the last years the study of initial boundary value problems for nonlinear dispersive models on the half-lines has given attention of many researchers (see \cite{Cav1} for a survey about the topic.) This turns out to be a rather challenging problem, since the more natural techniques used in standard domains, as $\R^n$ or $\mathbb T$ does not work directly in the context of the half-lines due the lack of Fourier transform.

\subsection{Presentation of the model}
Consider the quadratic interactions 
\begin{equation}\label{system}
\left\{\begin{array}{l}{i \partial_{t} u(x, t)+p \partial_{x}^{2} u(x, t)+\overline{u}(x, t) v(x, t)=0, \quad x \in \mathbb{R}, t \geq 0}; \\ {i \sigma \partial_{t} v(x, t)+q \partial_{x}^{2} v(x,t)+ u^{2}(x, t)=0}, \quad x \in \mathbb{R}, t \geq 0; \\ {u(x, 0)=u_{0}(x), \quad v(x, 0)=v_{0}(x)}, \quad x \in \mathbb{R}.\end{array}\right.
\end{equation}
Physically, according to the article \cite{menyuk-1994}, 
the complex functions $u$ and $v$ represent amplitude packets of the first and second harmonic of an optical wave, respectively. The values of $p$ and $q$ may be $1$ or $-1$, depending on the signals provided between the scattering-diffraction ratios and the positive constant $\sigma$ measures the scaling-diffraction magnitude indices. In recent years, interest in nonlinear properties of optical materials has attracted attention of physicists and mathematicians. Many researches suggest that by exploring the nonlinear reaction of the matter, the bit-rate capacity of optical fibers can be considerably increased and in consequence an improvement in the speed and economy of data transmission and manipulation. In the work \cite{CHEN2020} and \cite{numerical} current results are presented regarding numerical simulation of solutions for system (\ref{system}). 

{Another application for the system \eqref{1.a} is related to the Raman amplification in a plasma}. The study of laser-plasma interactions is an active area of interest. The main goal is to simulate nuclear fusion in a laboratory. In order to simulate numerically these experiments, we need some accurate models. The kinetic ones are the most relevant but very difficult to deal with practical computations. The fluids ones like bi-fluid Euler–Maxwell system seem more convenient but still inoperative in practice because of the high frequency motion and the small wavelength involved in the problem. This is why we need some intermediate models that are reliable from a numerical viewpoint \cite{colin2009stability}. For more details on wave propagation and processes Harmonic generation in nonlinear media see \cite{butcher1990}.




\subsection{Results  on $\R^n$ and $\mathbb T$ }

In the mathematical context N. Hayashi, T. Ozawa and K. Tanaka in \cite{hayashi2013} obtained local well-posedness for the Cauchy problem (\ref{1.a}) on the spaces $L^2(\mathbb{R}^n)\times L^2(\mathbb{R}^n)$ for $n\leq 4$ and $H^1(\mathbb{R}^n)\times H^1(\mathbb{R}^n)$ for $n\leq 6$. On the paper \cite{barbosa2018} the first author obtained local well posedness for the model posed on real line by assuming low regularity assumptions. In \cite{li2014recent} the time decay estimates of small solutions to the systems under the mass resonance condition in 2-dimensional space  was revised. In \cite{Hoshino} was obtained the global existence of analytic solutions in space dimensions $n\geq 3$, under the mass resonance condition for sufficiently small Cauchy data with exponential decay.

Regarding to qualitative properties of Cauchy problem solutions (\ref{1.a}), Linares and  Angulo  \cite{pava-2007} studied existence of periodic pulses and the stability and instability of such solitons. On the context of real line $\R$, Lopes \cite{lopes2005} obtained existence and stability os solitary waves for the system, by using variational methods.
	 In \cite{yew-2000} was obtained conditions for the existence of multipulses as well
	as a description of their geometry.  Also, on a recent paper \cite{CO} the authors obtained  formation of singularities and blow-up in the
	 $L^2(\R^n)$-(super)critical case and derived several stability results
	 concerning the ground state solutions of this system. In \cite{Hayashi 2} Hayashi,  Li and Ozawa studied the scattering theory for the system. Finally, we cite the recent works \cite{Pastor1} and \cite{Pastor2} for the dynamic of solutions in dimension $n=5$.

 \begin{remark}
 	 We empathize that it is sufficient to analyze the case $ p = q = 1 $ and observe that the other cases adapt to this one. In fact, the case $ p = q = -1 $ has the same behavior as the case $ p = q = 1 $. The case $ p = -q = 1 $ and $ p = -q = -1 $ ignores the variation of $ 0 <\sigma $ and is equivalent to the case $ p = q = 1 $ and $ \sigma> 2 $. 
 	
 \end{remark}

\subsection{Setting of the problem}This work is dedicated to the study the initial boundary value problem associated to system \eqref{system} on the half-line, more precisely
\begin{equation}\label{1.a}
	\begin{cases}
		i\partial_t u(x,t)+\partial^2_x u(x,t) + \bar{u}(x,t)v(x,t)=0, & x\in (0,+\infty),\; t\in(0,T),\\
		i\partial_t v(x,t)+a\partial^2_x v(x,t)+  u^2(x,t)=0,& x\in (0,+\infty),\; t\in (0,T),\\
				u(x,0)=u_0(x),\quad  v(x,0)=v_0(x),& x\in (0,+\infty),\\
		u(0,t)=f(t),\ v(0,t)=g(t),& t\in(0,T),
	\end{cases}
\end{equation}
where $u$ and $v$ are complex valued functions, where $a >0$. The model \eqref{1.a} is given by the nonlinear coupling of two dispersive equations of Schr\"odinger type through the quadratic terms
	\begin{equation}
		N_1(u,v)=\overline{u}\cdot v \ \mbox{ and } N_2(u,v)=u^2.
	\end{equation}

An important point in this model is the fact that the functional mass is not conserved, since some bad  terms of boundary appear in the mass functional. More precisely,  define the functional of mass for the system \eqref{1.a} by  $$\mathcal{M}(t)=\|u(t)\|^2_{L^2_x(\R^+)}+\|v(t)\|^2_{L^2_x(\R^+)}.$$

Formally, by multiplying the first equation of the system \eqref{1.a} by $\overline u$ and the second equation by $\overline v$, integrating by parts, taking the imaginary part and using Im $(\overline{u}^2v)=-\text{Im}( u^2\overline{v})$,  we get 
\begin{equation}\label{mass}
\mathcal{M}(t)=\mathcal{M}(0)+\text{Im}\int_0^t\overline u(0,s)\partial_xu(0,s)ds+a\text{Im}\int_0^t\overline v(0,s)\partial_xv(0,s)ds.
\end{equation}

This identity suggesters on the case of homogeneous boundary conditions a global result on the space $L^2(\R^+)\times L^2(\R^+)$.

\subsection{About the physics parameter $a$} In this model  the effect of the dispersion relation depends strongly of the values of $a$ (where $1/a$ denotes the mass of particle),  where this value interferes directly on the dynamic of the model.  More precisely, let the following frequencies and resonances variables
\begin{equation}\label{dispersion}
\left\lbrace 
\begin{array}{lll}
\tau=\tau_1+\tau_2 & \xi=\xi_1+\xi_2&     \\
\omega=\tau+\xi^2, & \omega_1=\tau_1-\xi_1^2,& \omega_2=\tau_2+a\xi_2^2.
\end{array}
\right.
\end{equation}
Then we have the following dispersion relation
$$
|\omega-\omega_1-\omega_2|= |\xi^2+\xi_1^2-a\xi_2^2|.$$ 
\begin{itemize}
\item On the case $0<a<\frac12$ we have the relation $ 	|\omega-\omega_1-\omega_2|\geq (1-2a)(\xi^2+\xi_1^2)	$. While on the case $a>\frac{1}{2}$, we get $|\omega-\omega_1-\omega_2|\geq 2|\xi-\mu_a \xi_2|\cdot |\xi-(1-\mu_a)\xi_2|, \mbox{ where }\ \mu_a=\frac{1-\sqrt{2a-1}}{2}.$ Note that in both cases we have a good relation dispersion, in the sense that it is possible to control the frequencies with the modulations. These situation  on the physical context is known as the non resonant mass condition ($a\neq \frac12$).

\item On the more critical case $a=\frac12$ we have not a good dispersion relation, with avoid a local result is a more larger region of regularity. This last situation, on the physical context is knows as the mass resonance condition.
\end{itemize}

In this context we call  mass resonance condition the case $a=\frac12$, while the mass nonresonance condition on the case $a\neq\frac12$.


\subsection{Functional spaces for the initial-boundary data}
Now we discuss appropriate functional spaces for the initial and boundary data, examining again the behavior of solutions of the linear problem on the line $\mathbb{R}$ as motivation.

On the line $\mathbb{R},$ we define the $L^{2}$-based inhomogeneous Sobolev spaces $H^{s}(\mathbb{R})$ equipped with the norm $\|\phi\|_{H^{s}(\mathbb{R})}=\left\|\langle\xi\rangle^{s} \widehat{\phi}(\xi)\right\|_{L^{2}(\mathbb{R})},$ where $\langle\xi\rangle=(1+|\xi|^2)^{1/2}$ and $\hat \phi$
denotes the Fourier transform of $\phi$. The operator $e^{i at \partial_{x}^{2}}$ 
 denotes the linear homogeneous solution group associated to the linear  Schr\"odinger  equations, respectively, posed on $\mathbb{R}$
i.e.,
$
e^{i at \partial_{x}^{2}} \phi(x)=\frac{1}{2 \pi} \int_{\mathbb{R}} e^{i x \xi} e^{-i at \xi^{2}} \widehat{\phi}(\xi) d \xi
.$
Some important time localized smoothing effects for the unitary groups $e^{i a t \partial_{x}^{2}}$ can be found in  \cite{KPV}. More specifically, we have the following estimate:
$$
\left\|\psi(t) e^{i at \partial_{x}^{2}} \phi(x)\right\|_{\mathcal{C}\left(\mathbb{R}_{x} ; H^{(2 s+1) / 4}\left(\mathbb{R}_{t}\right)\right)} \leq c\|\phi\|_{H^{s}(\mathbb{R}_x)},
$$
where $\psi(t)$ is a localized smooth cutoff function. This smoothing effect suggests that for data $(u_0,v_0,f,g)$ in the IBVP \eqref{1.a} is natural to consider the following hypothesis: we put
$$\mathcal{H}_{+}^{\kappa, s}:=H^{\kappa}\left(\mathbb{R}^{+}_x\right) \times H^{s}\left(\mathbb{R}^{+}_x\right) \times H^{(2 \kappa+1) / 4}\left(\mathbb{R}^{+}_t\right) \times H^{(2s+1) / 4}\left(\mathbb{R}^{+}_t\right).
$$

We fix a cutoff function $\psi$ in $C^{\infty}_0$  such that $0 \leq \psi(t) \leq 1,$
\begin{equation}
	\psi(t)=
	\begin{cases}
		1, \ \ \mbox{ if } \ \ |t|\leq 1,\\
		0,\ \ \mbox{ if }\ \ |t|\geq 2
	\end{cases}
\end{equation}
and $\psi_T(t)=\psi\left(\frac{t}{T}\right)$.

As far as we know, the local well-posedness for the system (\ref{1.a}), on the half-line, was never considered previously.
\subsection{Main Results}
Our main  local well-posedness result   is the following  statement.

\begin{theorem}\label{teo1}
	 Let the Sobolev index pair $(\kappa, s)$ verifying $s\neq \frac12$ and $\kappa \neq \frac12$ and
\begin{itemize}
	\item [(i)] $|\kappa|-1/2\leq s<\min\{\kappa+1/2,\ 2\kappa+1/2,1\}\;\text{and}\ \kappa<1\ \  \text{for}\; a>\frac12$ (first non resonant case);\\
\item [(ii)] $0\leq\kappa =s <1\; for\; a =\frac12\  (\text{resonant\ case});$\\
\item [(iii)] $\max\{-\frac12,|\kappa|-1\}\leq s<\min\{\kappa+1, \ 2\kappa+1,1 \}\; \text{and}\  \kappa<1\  \text{for}\; 0<a<\frac12$ (second non resonant case). 
For any $a>0$ and $(u_0,v_0)\in H^{\kappa}(\R^+)\times H^{s}(\R^+)$ and $(f,g)\in H^{\frac{2\kappa+1}{4}}(\R^+)\times H^{\frac{2s+1}{4}}(\R^+)$, verifying the additional compatibility conditions
\end{itemize}
\begin{equation}
\begin{cases}
	u(0)=f(0),&\text{for}\  \kappa>\frac12;
	\\
	v(0)=g(0),&\text{for}\  s>\frac12.
	\end{cases}
	\end{equation}

		 Then there exist a positive time $T=T\left(\|u_0\|_{H^{\kappa}(\R^+)}, \|v_0\|_{H^s(\R^+)}, \|f\|_{H^{\frac{2\kappa+1}{4}}(\R^+) }, \|g\|_{H^{\frac{2s+1}{4}}(\R^+) },a\right)$ and a distributional solution $(u(t), v(t))$ for the initial boundary value problem (\ref{1.a}) on the classes
	\begin{equation}
		u\in C\left([0,T]; H^{\kappa}(\mathbb{R}^+)\right) \ \ \mbox{ and } \ \ v\in C\left([0,T]; H^{s}(\mathbb{R}^+)\right).
	\end{equation}
	
	Moreover, the map $(u_0,v_0)\longmapsto (u(t), v(t))$ is locally Lipschitz from $H^{\kappa}(\mathbb{R}^+)\times H^s(\mathbb{R}^{+})$ into

	$C\left([0,T]; H^{\kappa}(\mathbb{R}^+)\times H^s(\mathbb{R}^{+})\right)$.
\end{theorem}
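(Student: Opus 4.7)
The plan is to follow the Colliander--Kenig/Holmer half-line strategy: extend the initial data to the full line, replace the boundary conditions by a Duhamel boundary forcing operator, and solve the resulting forced problem on $\R$ by a contraction in Bourgain spaces. Concretely, I first extend $u_0\in H^\kappa(\R^+)$ and $v_0\in H^s(\R^+)$ to $\tilde u_0\in H^\kappa(\R)$ and $\tilde v_0\in H^s(\R)$ with control of norms, and similarly extend $f,g$ in time. I then seek $(u,v)$ in the form
\begin{equation*}
u(t)=\psi(t)e^{it\partial_x^2}\tilde u_0+\psi(t)\int_0^t e^{i(t-t')\partial_x^2}(\bar u v)(t')\,dt'+\psi(t)\,\Li_1\big(f-\Lambda_1(u)\big)(x,t),
\end{equation*}
\begin{equation*}
v(t)=\psi(t)e^{iat\partial_x^2}\tilde v_0+\psi(t)\int_0^t e^{ia(t-t')\partial_x^2}(u^2)(t')\,dt'+\psi(t)\,\Li_2\big(g-\Lambda_2(v)\big)(x,t),
\end{equation*}
where $\Li_1,\Li_2$ are the boundary forcing operators associated to the Schr\"odinger groups with speeds $1$ and $a$, and $\Lambda_i$ evaluates the trace at $x=0$ of everything preceding. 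After extending and applying $\Li_j$, the restriction of the fixed point to $\R^+\times[0,T]$ furnishes the desired distributional solution of \eqref{1.a}.

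The contraction is set up in the adapted Bourgain space $X^{\kappa,b}\times X^{s,b}$ together with the trace space $H^{(2\kappa+1)/4}_t\times H^{(2s+1)/4}_t$, at a level $b<\tfrac12$ (this is forced by the fact that $X^{s,1/2}$ does not embed continuously into the trace space and by the need for the gain in $T$ via $\|\psi_T u\|_{X^{s,b'}}\les T^{(b-b')}\|u\|_{X^{s,b}}$ for $b'<b$). The linear pieces are handled by the standard Kato and Kenig--Ponce--Vega smoothing estimates together with the mapping properties of $\Li_j$ between $H^{(2s+1)/4}_t$ and $C_xH^s_t\cap X^{s,b}$ already developed in the literature for a single Schr\"odinger operator; these adapt to speed $a\neq 1$ with constants depending on $a$. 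What remains is therefore reduced to the two bilinear estimates
\begin{equation*}
\|\bar u\,v\|_{X^{\kappa,-b'}}\les \|u\|_{X^{\kappa,b}}\,\|v\|_{X^{s,b}},\qquad \|u^2\|_{X^{s,-b'}}\les \|u\|_{X^{\kappa,b}}^{2},
\end{equation*}
for suitable $b,b'<\tfrac12$, together with their analogues in the spaces adapted to the time-trace norm that are needed to close the boundary forcing iteration.

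The hard part, and the source of the three cases in the statement, is exactly these bilinear estimates at $b<\tfrac12$. Using the resonance function from \eqref{dispersion},
\begin{equation*}
\omega-\omega_1-\omega_2=\xi^2+\xi_1^2-a\xi_2^2,
\end{equation*}
I would dyadically decompose in frequency $|\xi|\sim N$, $|\xi_i|\sim N_i$ and in modulation $|\omega_j|\sim L_j$, distinguish high--high, high--low and low--high interactions, and in each configuration trade the largest modulation for derivatives using the resonance identity. For $a>\tfrac12$ the factorization $\xi^2+\xi_1^2-a\xi_2^2=2(\xi-\mu_a\xi_2)(\xi-(1-\mu_a)\xi_2)$ isolates two non-degenerate hyperplanes that force genuine high-modulation gain except on a set which is handled with the standard Strichartz/$L^4_{t,x}$ endpoint estimate; this yields the range $s<\kappa+\tfrac12$ and $s<1$ displayed in item~(i). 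For $0<a<\tfrac12$ the coercive lower bound $|\omega-\omega_1-\omega_2|\gtrsim \xi^2+\xi_1^2$ is so strong that one derivative more is available in each bilinear estimate, which translates into the wider range of item~(iii). For $a=\tfrac12$ the resonance degenerates along $\xi=\xi_2$, so frequency and modulation can no longer be separated efficiently; the best one can do is close the estimates on the diagonal $\kappa=s$ via an $L^6_t L^2_x$ argument, producing item~(ii). I expect this case analysis of the bilinear estimates, rather than the functional-analytic framework, to be the main technical obstacle; once they are in hand, choosing $T$ small and standard arguments give a contraction, continuity of the data-to-solution map, and the $C_t H^\kappa_x\times C_t H^s_x$ regularity of the solution on the half-line.
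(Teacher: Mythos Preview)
Your overall architecture matches the paper's exactly: extend the data, recast the IBVP as a forced problem on $\R$ via Duhamel boundary forcing operators $\mathcal{L}^\lambda_a$, and run a contraction in $X^{\kappa,b}\times X_a^{s,b}$ (with the companion trace spaces) at $b<\tfrac12$. The paper sets up precisely the fixed-point map you describe and closes it with the linear estimates for $\mathcal{L}_a^\lambda$ together with the four bilinear bounds (two in $X$, two in the trace-adapted spaces $W$).

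The one substantive methodological difference is in how the bilinear estimates are proved. You propose a Littlewood--Paley/modulation dyadic decomposition, supplemented by Strichartz ($L^4_{t,x}$) in the $a>\tfrac12$ case and an $L^6_tL^2_x$ argument in the resonant case. The paper instead follows the Ginibre--Tsutsumi--Velo route: by duality and Cauchy--Schwarz the estimates are reduced to showing that explicit scalar integrals $J_i(\xi,\tau)$ are uniformly bounded, and these are handled by splitting $\R^4$ according to which of $|\omega|,|\omega_1|,|\omega_2|$ is maximal (for $a\neq\tfrac12$) and using only the elementary calculus lemmas~\ref{lemagtv}--\ref{Hol}. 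In particular, no Strichartz input is used anywhere, and the resonant case $a=\tfrac12$ is disposed of by the trivial observation that with $|\kappa|\le s$ the integrand is already bounded by $\langle\tau+2\xi_2^2-2\xi\xi_2+\xi^2\rangle^{-(4b-1)}$, which integrates because $b>3/8$. Your dyadic scheme should also reach the stated index ranges, but the paper's approach is shorter and more elementary; conversely, a dyadic proof would likely be more robust if one wanted to push toward sharpness or treat endpoints.
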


The approach used to prove this result is based on the arguments introduced in \cite{CK}. The main idea to solve the IBVP \eqref{1.a} is the construction of an auxiliary forced IVP in the line $\mathbb{R},$ analogous to \eqref{1.a}; more precisely:
\begin{equation}
\left\{\begin{array}{ll}
i \partial_{t} u(x, t)+\partial_{x}^{2} u(x, t)+\bar{u}(x, t) v(x, t)=\mathcal{T}_{1}(x) h_{1}(t), & (x, t) \in \mathbb{R} \times(0, T) \\
i \partial_{t} v(x, t)+a \partial_{x}^{2} v(x, t)+ u^{2}(x, t)=\mathcal{T}_{2}(x) h_{2}(t), & (x, t) \in \mathbb{R} \times(0, T) \\
u(x, 0)=\widetilde{u}_{0}(x), \quad v(x, 0)=\widetilde{v}_{0}(x), & x \in \mathbb{R}
\end{array}\right.
\end{equation}
where $\mathcal{T}_{1}, \mathcal{T}_{2}$ are appropriate distributions supported in $\mathbb{R}^{-}, \tilde{u}_{0}, \tilde{v}_{0}$ are nice extensions of $u_{0}$ and $v_{0}$ in $\mathbb{R}$ and the boundary forcing functions $h_{1}, h_{2}$ are selected to ensure that
$$
\widetilde{u}(0, t)=f(t) \quad \text { and } \quad \widetilde{v}(0, t)=g(t)
$$
for all $t \in(0, T)$.

Now, as the consequence of  Theorem \ref{teo1} and by using the  functional mass \eqref{mass} we have the following result.
\begin{theorem}\label{cor}
	For any $a>0$ and $(u_0,v_0)\in L^2(\R^+)\times L^2(\R^+)$. Then, the
	corresponding local solution on the classes
	\begin{equation}
	u\in C\left([0,T]; L^{2}(\mathbb{R}^+)\right) \ \ \mbox{ and } \ \ v\in C\left([0,T]; L^{2}(\mathbb{R}^+)\right).
	\end{equation} of the IBVP \eqref{1.a}  with homogeneous boundary conditions (i.e.
	$f = g = 0$) can be extended for all time interval $[0; T]$, for any time $T > 0$. 
\end{theorem}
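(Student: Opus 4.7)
The plan is to combine the local theory of Theorem \ref{teo1} at the $L^2$ level with the a priori bound provided by mass conservation and a standard continuation argument. I would first invoke Theorem \ref{teo1} with $\kappa=s=0$ and $f=g=0$, which is admissible for every $a>0$: for $a>\frac12$ one has $-\frac12\le 0<\frac12$ so case (i) applies; the case $a=\frac12$ is covered by (ii); and for $0<a<\frac12$ the condition $-\frac12\le 0<1$ in (iii) holds. This yields a distributional solution $(u,v)\in C([0,T_0];L^2(\R^+))\times C([0,T_0];L^2(\R^+))$ on an interval $[0,T_0]$ whose length $T_0=T_0(\|u_0\|_{L^2(\R^+)},\|v_0\|_{L^2(\R^+)},a)$ depends only on the $L^2$ norms of the data.

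The key step is to justify the mass identity \eqref{mass} rigorously at the $L^2$ level. Since $f=g=0$, the boundary traces $u(0,t)$ and $v(0,t)$ vanish, so the boundary contributions in \eqref{mass} are zero and formally $\mathcal{M}(t)=\mathcal{M}(0)$. To make this precise, I would approximate $(u_0,v_0)$ in $L^2(\R^+)\times L^2(\R^+)$ by a sequence $(u_0^n,v_0^n)\in C_0^\infty(\R^+)\times C_0^\infty(\R^+)$ (which together with $f=g=0$ fulfills every compatibility condition at every order), for which the corresponding solutions $(u^n,v^n)$ are classical and satisfy \eqref{mass} in the strong sense with genuinely vanishing boundary terms. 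The local Lipschitz dependence on the initial data provided by Theorem \ref{teo1} then gives $(u^n,v^n)\to (u,v)$ in $C([0,T_0];L^2(\R^+)\times L^2(\R^+))$, and passing to the limit yields $\|u(t)\|_{L^2(\R^+)}^2+\|v(t)\|_{L^2(\R^+)}^2=\|u_0\|_{L^2(\R^+)}^2+\|v_0\|_{L^2(\R^+)}^2$ for every $t\in[0,T_0]$.

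With the $L^2\times L^2$ norm conserved, I would then restart Theorem \ref{teo1} at $t=T_0$ with initial data $(u(T_0),v(T_0))$ and still homogeneous boundary data, obtaining a solution on $[T_0,2T_0]$ with the same existence time $T_0$. Gluing and iterating produces a solution on $[0,nT_0]$ for every $n\in\N$, hence on $[0,T]$ for arbitrary $T>0$.

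The only nontrivial point I anticipate is the density argument of the second paragraph: one must confirm that the smooth, compactly supported approximations genuinely produce classical solutions whose boundary traces vanish identically in $t$, so that the boundary terms in \eqref{mass} are truly zero rather than merely formal, and that the regularity of the approximating solutions is high enough to legitimize the multiplication by $\overline{u^n}$, $\overline{v^n}$ and the subsequent integration by parts. Once this continuity/approximation step is secured, the global extension is automatic, since the local existence time depends solely on the conserved quantity $\mathcal{M}$.
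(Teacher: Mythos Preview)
Your proposal is correct and follows essentially the same route as the paper: invoke the local theory at $\kappa=s=0$, justify the mass identity \eqref{mass} for $L^2$ solutions via a regularization/approximation argument (the paper defers this to a reference rather than spelling it out), and then iterate using the fact that the local existence time depends only on the conserved $L^2\times L^2$ norm. Your concern about the approximation step is exactly the point the paper glosses over by citing an external regularization argument.
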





Now we describe the structure of the work. {Section \ref{2} is devoted to  summarize some preliminary results}. Sections \ref{section3} and \ref{section4} we will treat the Duhamel boundary forcing operator classes associated to linear Schr\"odinger equation.  Section \ref{bilinear} we will get the bilinear estimates for the coupling terms nonlinear. In Section \ref{6} we will show the proof of local result for local result and Section \ref{7} the global result. Finally, in Appendix we will prove a more technical lemma concern the bilinear estimate in adapted Bourgain spaces. 
\section{Preliminary results}\label{2}

A important point in the approach used here is the understanding of the how capture the dispersive effect smoothing caused by the nonlinear part of a dispersive equation. To illustrate this situation we consider the Cauchy problem  of the form
\begin{equation}\label{2.1}
	i\partial_tu(x,t)+ia\partial_x^2u(x,t)=F(u(x,t)),
\end{equation}
where $F$ is a nonlinear function.

The Cauchy Problem for (\ref{2.1})  is rewritten as the following integral equation
\begin{equation}\label{2.2}
	u(t)=U_{a}(t)u_0-i\displaystyle\int_0^t U_{a}(t-t')F(u(t'))dt',
\end{equation}
where $U_{a}(t)=e^{-ita\partial_x^2}$ is the group that solves the linear part of (\ref{2.1}).

There are a number of ways to capture this dispersive smoothing effect, but one particularly convenient way is via the $X^{s,b}$ spaces (also known as Fourier restriction spaces, Bourgain spaces, or dispersive Sobolev spaces), introduced by Bourgain in \cite{bourgain-1993}. The full name of these
 is $X_{\tau=a\xi^2}^{s, b}\left(\mathbb{R} \times \mathbb{R}\right),$ thus these spaces take $\mathbb{R} \times \mathbb{R}$ as their domain and are adapted to a single characteristic hypersurface $\tau=a\xi^2 .$ Roughly speaking, these spaces are to dispersive equations as Sobolev spaces are to elliptic equations (fore more details see \cite{linares} and \cite{tao}).

Let $X_a^{s,b}$ be the completion of $\mathcal{S}(\mathbb{R}^2)$ with respect to the norm
\begin{equation}\label{2.3}
	\begin{array}{ll}
		\left\| f \right\|_{X_a^{s,b}}&=\left\| \langle\xi\rangle^s\langle\tau+a\xi^2\rangle^b \widehat{f}(\tau,\xi) \right\|_{L^2_{\tau}L^2_{\xi}}.
	\end{array}
\end{equation}

The following lemma was proved while establishing the local well-posedness of the Zakharov system by Ginibre, Tsutsumi and Velo in \cite{ginibre-1997}.

\begin{lemma}\label{l2.1}
	Let $-\frac{1}{2}<b'\leq 0\leq b\leq b'+1$, $\psi$ a cutoff function and $T\in [0,1]$. Then for $F\in X_a^{s, b'}$ we have
	\begin{equation}\label{2.4}
		\left\|\psi_1(t)U_{a}(t)u_0 \right\|_{X_a^{s,b}}\leq c \left\|u_0 \right\|_{H^s},
	\end{equation}
	\begin{equation}\label{2.5}
		\left\|\psi_T(t)\displaystyle\int_0^t U_{a}(t-t')F(u(t'))dt'\right\|_{X_a^{s,b}}\leq c T^{1+b'-b} \left\|F \right\|_{X_a^{s,b'}}.
	\end{equation}
\end{lemma}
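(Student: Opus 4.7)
The plan is to prove the two inequalities by direct computation on the spacetime Fourier side, following the classical argument of Ginibre--Tsutsumi--Velo. For the homogeneous estimate \eqref{2.4}, the key observation is that $U_{a}(t)u_{0}$ is supported on the characteristic hypersurface $\tau+a\xi^{2}=0$; in spacetime Fourier,
$$\widehat{U_{a}(t)u_{0}}(\tau,\xi)=c\,\delta(\tau+a\xi^{2})\,\widehat{u_{0}}(\xi),$$
so multiplication by the smooth time cutoff $\psi_{1}$ smears the delta into a Schwartz profile:
$$\widehat{\psi_{1}\,U_{a}u_{0}}(\tau,\xi)=c\,\widehat{\psi_{1}}(\tau+a\xi^{2})\,\widehat{u_{0}}(\xi).$$
Inserting this into the norm \eqref{2.3} and substituting $\sigma=\tau+a\xi^{2}$ decouples the $\tau$- and $\xi$-integrals, giving
$$\|\psi_{1}U_{a}u_{0}\|_{X_{a}^{s,b}}^{2}=\|u_{0}\|_{H^{s}}^{2}\int_{\mathbb{R}}\langle\sigma\rangle^{2b}|\widehat{\psi_{1}}(\sigma)|^{2}\,d\sigma,$$
which is finite since $\psi_{1}\in\mathcal{S}(\mathbb{R})$.

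For the inhomogeneous estimate \eqref{2.5}, I would insert the spacetime Fourier representation of $F$ into the Duhamel integral and carry out the $t'$-integration explicitly to obtain
$$\int_{0}^{t}U_{a}(t-t')F(t')\,dt'=c\iint e^{ix\xi}\,e^{-iat\xi^{2}}\,\frac{e^{it(\tau+a\xi^{2})}-1}{i(\tau+a\xi^{2})}\,\widehat{F}(\tau,\xi)\,d\tau\,d\xi.$$
Then I would split the $\tau$-integration into the low-modulation region $|\tau+a\xi^{2}|T\leq 1$ and the high-modulation region $|\tau+a\xi^{2}|T>1$. In the low region, I expand the numerator $e^{it(\tau+a\xi^{2})}-1$ as a Taylor series in $t(\tau+a\xi^{2})$; the $k$-th term contributes $\psi_{T}(t)t^{k}$ times a pure function of $x$ to which estimate \eqref{2.4} applies, and the hypothesis $b\leq b'+1$ is exactly what makes the resulting series in $k$ converge with the correct power of $T$. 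In the high region, split the kernel $(e^{it(\tau+a\xi^{2})}-1)/i(\tau+a\xi^{2})$ into its two natural pieces: the $-1/i(\tau+a\xi^{2})$ term becomes, after integration in $\tau$, a pure function of $\xi$ to which \eqref{2.4} again applies, while the $e^{it(\tau+a\xi^{2})}/i(\tau+a\xi^{2})$ term is already an $X_{a}^{s,b}$-multiplier whose extra denominator produces the required $T$-gain by a pointwise size estimate.

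The main obstacle is the sharp bookkeeping of the $T$-dependence in the low-modulation Taylor series. The constraint $b\leq b'+1$ is what controls the series in $k$, while $b'>-1/2$ ensures convergence of the weighted $\sigma$-integral over $|\sigma|\leq T^{-1}$ carrying the weight $\langle\sigma\rangle^{2b'}$; both hypotheses in the lemma appear precisely at this point to produce the sharp prefactor $T^{1+b'-b}$. Carrying the spatial weight $\langle\xi\rangle^{2s}$ through by Plancherel in $\xi$ is then routine and completes the argument.
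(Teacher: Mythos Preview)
Your sketch is correct and follows exactly the classical Ginibre--Tsutsumi--Velo argument. The paper itself does not give a proof: it simply writes ``See Lemma 2.1 in \cite{ginibre-1997}''. So there is nothing to compare beyond noting that you have reproduced the relevant computation from the cited reference, which is the standard (and essentially only) proof of these linear estimates.
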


\begin{proof}
	See Lemma 2.1 in \cite{ginibre-1997}.
\end{proof}

A delicate point in IBVPS posed on the half-line is to treat the traces of solutions. To treat this we need to work with the following adapted Bourgain spaces $W_a^{s,b}$ given by
\begin{equation}
\|u\|_{W_a^{s, b}}=\left(\iint\langle\tau\rangle^{\frac{s}{2}}\left\langle\tau-a\xi^{2}\right\rangle^{2 b}|\hat{u}(\xi, \tau)|^{2} d \xi d \tau\right)^{\frac{1}{2}}
\end{equation}

The traces estimate for the Duhamel operator obtained in \cite{cavalcante} read as follows.
 \begin{lemma}
 Let $\mathcal{S}_a u=\displaystyle\int_0^t U_a(t-t')u(x,t)dt'$.	The following estimate is valid for $-\frac12<d_1<0$.
 	\begin{equation}
 	\|\psi(t) \mathcal{S}_a u(x, t)\|_{\mathcal{C}\left(\mathbb{R}_{x} ; H^{(2 s+1) / 4}\left(\mathbb{R}_{t}\right)\right)} \leq\left\{\begin{array}{ll}
 		c\|u\|_{X_a^{s, d_{1}}}, & \text { if }-\frac{1}{2}<s \leq \frac{1}{2} \\
 		c\left(\|u\|_{U_a^{s, d_{1}}}+\|u\|_{X_a^{s, d_{1}}}\right), & \text { for all } s \in \mathbb{R}.
 	\end{array}\right.
 	\end{equation}
 \end{lemma}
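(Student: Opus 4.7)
The strategy is to reduce the trace estimate to the Kenig--Ponce--Vega smoothing effect for the free group, combined with a modulation decomposition of the Duhamel remainder. Using $\widehat{U_a(t)\phi}(\xi)=e^{-ita\xi^2}\widehat\phi(\xi)$, one has the elementary identity
\begin{equation*}
\mathcal{S}_a u(x,t)=c\int_\R e^{ix\xi}\int_\R\frac{e^{it\tau}-e^{-ita\xi^2}}{i(\tau+a\xi^2)}\widehat u(\xi,\tau)\,d\tau\,d\xi=:F(x,t)+G(x,t),
\end{equation*}
where $F$ collects the $e^{-ita\xi^2}$ summand and $G$ the $e^{it\tau}$ summand. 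The first piece is a free evolution: $\psi(t)F(x,t)=c\,\psi(t)U_a(t)\Psi(x)$ with $\Psi(\xi):=\int \widehat u(\xi,\tau)/(i(\tau+a\xi^2))\,d\tau$. Splitting $\bra{\tau+a\xi^2}^{-1}$ as $\bra{\tau+a\xi^2}^{-1-d_1}\cdot\bra{\tau+a\xi^2}^{d_1}$ and applying Cauchy--Schwarz in $\tau$ (admissible since $-\tfrac12<d_1<0$) yields $\|\Psi\|_{H^s_x}\les \|u\|_{X_a^{s,d_1}}$, and the Kenig--Ponce--Vega smoothing estimate quoted in the introduction then controls $\psi(t)F$ in the target trace norm.

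For the remainder $G$, I would attach the cutoff and take the $t$-Fourier transform: $\psi(t)G(x,t)$ becomes in $\tau$ a convolution against $\widehat\psi$ applied to the symbol $\widehat u(\xi,\tau)/(\tau+a\xi^2)$. A dyadic decomposition in the modulation $L:=\bra{\tau+a\xi^2}$ is then natural. In the high-modulation regime $L\ges\bra\tau$ one gains the factor $L^{-1}$, and a Cauchy--Schwarz in $\tau$ analogous to the one used for $\Psi$ recovers the bound by $\|u\|_{X_a^{s,d_1}}$ against the correct weight $\bra\xi^s\bra\tau^{(2s+1)/4}$. In the complementary low-modulation regime $L\ll\bra\tau$ the dispersion relation forces $|\tau|\sim a|\xi|^2$, so the change of variables $\tau\leftrightarrow a\xi^2$ underlying Kato smoothing converts the time weight $\bra\tau^{(2s+1)/4}$ into a $1/2$-derivative spatial gain on top of $\bra\xi^s$; this is still absorbed by $\bra\xi^s$ inside $X_a^{s,d_1}$ provided $-\tfrac12<s\leq\tfrac12$, closing the first line of the lemma.

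The main obstacle, and the reason the second line of the statement contains an extra term, is the range $s>\tfrac12$: there the low-modulation trade-off produces strictly more $\xi$-derivatives than the $\bra\xi^s$ weight of $X_a^{s,d_1}$ can absorb, and the $X_a^{s,d_1}$ bound genuinely breaks down. The adapted norm $U_a^{s,d_1}$ is tailored precisely to this regime, enforcing additional control of $\widehat u$ on the set where $|\tau|\sim a|\xi|^2$. Estimating the low-modulation piece by $\|u\|_{U_a^{s,d_1}}$ and the high-modulation piece by $\|u\|_{X_a^{s,d_1}}$ then produces the bound valid for all $s\in\R$. The supremum in $x$ passes through every step: in both $F$ and $G$ the $x$-dependence is carried only by the unimodular factor $e^{ix\xi}$, so after the square integration in $\tau$ at fixed $x$ it drops out, and no extra loss in $x$ is incurred.
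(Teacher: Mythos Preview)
The paper does not prove this lemma; it is quoted from \cite{cavalcante} without argument. Your outline follows the standard route taken there (and in \cite{Holmer}, \cite{CK}): split the Duhamel integral via the Fourier identity into a free-evolution piece and a remainder, control the free piece by the Kenig--Ponce--Vega smoothing after Cauchy--Schwarz in $\tau$, and handle the remainder by a modulation decomposition. Your identification of why the auxiliary norm is needed for $s>\tfrac12$ is also correct in spirit.

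Two points require more care. First, the denominator in your Fourier identity is $\tau+a\xi^2$, not $\langle\tau+a\xi^2\rangle$; your Cauchy--Schwarz step for $\Psi$ silently replaces one by the other. The usual fix is to split $u$ at the outset according to $|\tau+a\xi^2|\leq 1$ versus $|\tau+a\xi^2|>1$; on the low-modulation set one Taylor-expands $(e^{it\tau}-e^{-ita\xi^2})/(\tau+a\xi^2)=\sum_{k\geq 1}(it)^k(\tau+a\xi^2)^{k-1}/k!$ and each resulting term is again a (time-localized) free evolution with a harmless weight, while on the high-modulation set your argument goes through as written. Second, your final remark about the $x$-supremum is not accurate: the factor $e^{ix\xi}$ sits \emph{inside} the $\xi$-integral, and after squaring in $\tau$ it certainly does not drop out. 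What actually makes the bound uniform in $x$ is, for $F$, the KPV smoothing you already invoked; and for $G$, in the low-modulation regime the change of variables $\xi\mapsto -a\xi^2$ you mention earlier (whose Jacobian $c|\xi|^{-1}$ is precisely the half-derivative gain), together with Cauchy--Schwarz in $\xi$ using the extra decay $\langle\tau+a\xi^2\rangle^{-1}$ in the high-modulation regime. With these two repairs your sketch is the argument of the cited reference.
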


We finish this section with the following elementary integral estimates which will be used to estimate the nonlinear terms in Section \ref{bilinear}.

\subsection{Elementary integral estimate}
Now we enunciate some elementary integral estimate, where the proofs can be view in \cite{Holmer}.
\begin{lemma}\label{lemagtv}
	Let  $b_1,b_2$, such that $b_1+b_2>\frac{1}{2}$ and $b_1,b_2< \frac{1}{2}$. Then
	\begin{equation*}
	\int\frac{dy}{\langle y-\alpha\rangle^{2b_1}\langle y-\beta\rangle^{2b_2}}\leq \frac{c}{\langle \alpha-\beta\rangle^{2b_1+2b_2-1}}.
	\end{equation*}
\end{lemma}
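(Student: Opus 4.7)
The lemma is a routine (and standard) calculus inequality whose proof reduces to a careful case split on the relative sizes of $|y-\alpha|$, $|y-\beta|$, and $\langle\alpha-\beta\rangle$. My plan is to first normalize by translation: substituting $z=y-\beta$ leaves the Lebesgue measure invariant, and writing $A:=\alpha-\beta$ the claim becomes
\[
I(A) := \int_{\mathbb R}\frac{dz}{\langle z-A\rangle^{2b_1}\langle z\rangle^{2b_2}}\ \lesssim\ \langle A\rangle^{\,1-2b_1-2b_2}.
\]
By symmetry I may further assume $A\geq 0$.

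For the easy regime $0\leq A\leq 1$, the target $\langle A\rangle^{1-2b_1-2b_2}$ is comparable to a positive constant, and the weights $\langle z-A\rangle$ and $\langle z\rangle$ are pointwise comparable uniformly in $z$. Hence $I(A)\lesssim\int_{\mathbb R}\langle z\rangle^{-2(b_1+b_2)}\,dz$, which is finite precisely because $b_1+b_2>\tfrac12$.

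For $A>1$ I split $\mathbb R$ into three regions: (I) $|z|\leq A/2$, (II) $|z-A|\leq A/2$, and (III) the complement $\{\,z<-A/2\}\cup\{z>3A/2\,\}$. In (I) one has $|z-A|\geq A/2\sim\langle A\rangle$, so pulling the large weight out and using $2b_2<1$ to integrate $\langle z\rangle^{-2b_2}$ over $[-A/2,A/2]$ gives a factor $A^{1-2b_2}$, and the whole integral is $\lesssim\langle A\rangle^{1-2b_1-2b_2}$; region (II) is symmetric and uses $2b_1<1$. In (III) both $\langle z-A\rangle$ and $\langle z\rangle$ are comparable to $|z|$, so the integrand is bounded by $\langle z\rangle^{-2(b_1+b_2)}$, and the condition $2(b_1+b_2)>1$ makes the tail integrable with value $\lesssim A^{1-2b_1-2b_2}$. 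Summing the three contributions yields the claim.

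There is no real obstacle: the only thing to watch is that each of the three hypotheses is used at exactly one spot ($b_2<\tfrac12$ in region (I), $b_1<\tfrac12$ in region (II), and $b_1+b_2>\tfrac12$ in region (III) and in the $A\leq1$ regime), and that strict inequalities are needed to avoid logarithmic losses at the endpoints. All implicit constants depend only on $b_1,b_2$.
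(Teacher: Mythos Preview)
Your proof is correct. The paper itself does not prove this lemma: it merely states the estimate (together with two companion integral estimates) and refers the reader to Holmer's paper for the proofs, so there is no ``paper's own proof'' to compare against; your translation reduction followed by the three-region split for $|A|>1$ is the standard argument, and you correctly identify where each of the three hypotheses $b_1<\tfrac12$, $b_2<\tfrac12$, $b_1+b_2>\tfrac12$ is actually used.
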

\begin{lemma}\label{lemanovo}
	If $b>\frac{1}{2}$, then
	\begin{equation*}
	\int_{-\infty}^{\infty}\frac{dx}{\langle\alpha_0+\alpha_1x+x^2\rangle^b}\leq c.
	\end{equation*}
\end{lemma}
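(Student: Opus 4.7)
The plan is to reduce the claim to a one-parameter family of integrals via completion of the square. Write $\alpha_0 + \alpha_1 x + x^2 = (x + \alpha_1/2)^2 + \beta$ with $\beta := \alpha_0 - \alpha_1^2/4$ and change variable $y = x + \alpha_1/2$. By translation invariance and parity everything reduces to showing
$$I(\beta) := \int_{-\infty}^\infty \frac{dy}{\langle y^2 + \beta\rangle^b} \leq c$$
uniformly in $\beta \in \mathbb{R}$. Restricting to $y > 0$ and substituting $u = y^2 + \beta$, so that $dy = du/(2\sqrt{u - \beta})$, and then shifting $w = u - \beta$, converts this into
$$I(\beta) = \int_0^\infty \frac{dw}{\sqrt{w}\,\langle w + \beta\rangle^b},$$
a representation that separates the root-type singularity (at $w = 0$) from the $\beta$-dependence.

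The case $\beta \geq 0$ is immediate: since $\langle w + \beta\rangle \geq \langle w\rangle$, one gets
$$I(\beta) \leq \int_0^1 \frac{dw}{\sqrt{w}} + \int_1^\infty w^{-1/2-b}\,dw,$$
which is finite for $b > 1/2$ and independent of $\beta$. For $\beta < 0$ I set $\gamma := -\beta$, treat small $\gamma$ by the previous estimate up to constants, and for large $\gamma$ split $(0, \infty)$ into the three regions $(0, \gamma/2)$, $[\gamma/2, 3\gamma/2]$, $(3\gamma/2, \infty)$. On the first, $|w - \gamma| \geq \gamma/2$ bounds the contribution by a constant times $\gamma^{1/2 - b}$; on the third, $w - \gamma \geq w/3$ reduces it to $C\int_{3\gamma/2}^\infty w^{-1/2-b}\,dw$. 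Both pieces vanish as $\gamma \to \infty$ thanks to $b > 1/2$.

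The main obstacle is the middle region $[\gamma/2, 3\gamma/2]$, which surrounds the real root of $y^2 + \beta$. There $\sqrt{w} \geq \sqrt{\gamma/2}$, and the translation $r = w - \gamma$ bounds the contribution by a constant multiple of $\gamma^{-1/2}\int_{-\gamma/2}^{\gamma/2}\langle r\rangle^{-b}\,dr$. This last integral grows like $\gamma^{1-b}$ when $1/2 < b < 1$, like $\log\gamma$ at $b = 1$, and is uniformly bounded for $b > 1$; in every sub-case the prefactor $\gamma^{-1/2}$ leaves a quantity bounded uniformly in $\gamma$. Assembling the three pieces gives $I(\beta) \leq c$ independent of $\beta$, and hence of $\alpha_0, \alpha_1$. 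The condition $b > 1/2$ enters sharply in precisely this middle region, since the bound there would diverge logarithmically at $b = 1/2$.
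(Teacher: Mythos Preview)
Your proof is correct. The paper does not actually supply a proof of this lemma; it only states the estimate among several ``elementary integral estimates'' and refers the reader to \cite{Holmer}. Your argument---completion of the square to reduce to the one-parameter integral $I(\beta)$, then the substitution $w=y^2$ followed by a three-region splitting around the real root when $\beta<0$---is a clean, self-contained derivation that makes the role of the hypothesis $b>\tfrac12$ explicit. In particular, you correctly isolate the middle region $[\gamma/2,3\gamma/2]$ as the only place where the condition is sharp, and your case analysis in $b$ there is accurate. The only cosmetic remark is that the factor of $2$ from restricting to $y>0$ by parity and the factor $\tfrac12$ from the Jacobian of $u=y^2+\beta$ silently cancel in your displayed formula for $I(\beta)$; this is fine but worth making explicit.
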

\begin{lemma}\label{Hol}
	If $b<\frac{1}{2}$, then
	\begin{equation*}
	\int_{|x|<\beta}\frac{dx}{\langle x\rangle^{4b-1}|\alpha-x|^{\frac{1}{2}}}\leq c\frac{(1+\beta)^{2-4b}}{\langle\alpha\rangle^{\frac{1}{2}}}.
	\end{equation*}
\end{lemma}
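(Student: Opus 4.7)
\emph{Proof plan.} My plan is to split the integration domain into a finite number of subregions determined by the relative sizes of $|\alpha|$, $|x|$, and $\beta$, apply a pointwise upper bound to the integrand in each, and verify that the resulting integrals sum up to at most $c(1+\beta)^{2-4b}/\langle\alpha\rangle^{1/2}$. The two elementary estimates I would use repeatedly are the model integral
\[
\int_{|x|\leq M}\langle x\rangle^{1-4b}\,dx \leq c(1+M)^{2-4b},
\]
which converges because $b<\tfrac12$ forces $1-4b>-1$, together with the Abel-type estimate $\int_{|y|<L}|y|^{-1/2}\,dy\leq cL^{1/2}$.

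In the regime $\langle\alpha\rangle\leq 4$ the factor $\langle\alpha\rangle^{-1/2}$ is comparable to $1$ and it suffices to bound the integral by $c(1+\beta)^{2-4b}$. I would split into $\{|x-\alpha|\leq 1\}$ and its complement inside $\{|x|<\beta\}$: on the first piece $\langle x\rangle^{1-4b}$ is $O(1)$ and the Abel estimate controls $\int|\alpha-x|^{-1/2}dx$, while on the second $|\alpha-x|^{-1/2}\leq 1$, so the first workhorse integral closes the bound.

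For $|\alpha|>4$ I would decompose into $A_1=\{|x|\leq|\alpha|/2\}$ and $A_2=\{|\alpha|/2<|x|<\beta\}$. On $A_1$ one has $|\alpha-x|\geq|\alpha|/2$, so
\[
\int_{A_1\cap\{|x|<\beta\}}\frac{dx}{\langle x\rangle^{4b-1}|\alpha-x|^{1/2}}\leq \frac{c}{\langle\alpha\rangle^{1/2}}\int_{|x|\leq\min(|\alpha|/2,\beta)}\langle x\rangle^{1-4b}\,dx\leq\frac{c(1+\beta)^{2-4b}}{\langle\alpha\rangle^{1/2}}
\]
directly. The region $A_2$ is non-empty only when $|\alpha|<2\beta$, in which case $\langle\alpha\rangle\leq c(1+\beta)$; I would split $A_2$ further into $|x|\leq 2|\alpha|$ (where $\langle x\rangle\sim\langle\alpha\rangle$ and the Abel integral over $|\alpha-x|<3|\alpha|$ produces a factor $|\alpha|^{1/2}$) and $|x|>2|\alpha|$ (where $|\alpha-x|\sim|x|$, so the integrand reduces to $|x|^{1/2-4b}$ integrated from $2|\alpha|$ to $\beta$), then match the resulting powers of $|\alpha|$ or $\beta$ to $(1+\beta)^{2-4b}/\langle\alpha\rangle^{1/2}$ using $|\alpha|\leq 2\beta$.

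The main obstacle is exponent bookkeeping rather than any conceptual difficulty: the signs of $1-4b$, $1/2-4b$, and $3/2-4b$ flip as $b$ varies in $(0,\tfrac12)$, which changes which endpoint dominates the antiderivative on the outer subregion $|x|>2|\alpha|$ (with a logarithm appearing at the critical value $b=\tfrac38$, tamed by $\log t\leq ct^{1/2}$ for $t\geq 1$). The hypothesis $b<\tfrac12$ is exactly what keeps every integral convergent and makes the exponent arithmetic close in favor of $(1+\beta)^{2-4b}$.
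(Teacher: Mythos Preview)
Your plan is correct: the decomposition into $\langle\alpha\rangle\leq4$ versus $|\alpha|>4$, then $A_1=\{|x|\leq|\alpha|/2\}$ versus $A_2$, and the further split of $A_2$ at $|x|=2|\alpha|$, handles every regime, and the exponent bookkeeping (including the logarithm at $b=3/8$, controlled via $\log t\leq ct^{1/2}$) closes in each case.

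As for comparison with the paper: there is nothing to compare. The paper does not prove this lemma; it merely states it among the ``elementary integral estimates'' and refers the reader to Holmer's paper \cite{Holmer} for the proof. Your argument therefore supplies what the paper omits, and the method you use---pointwise domination of $|\alpha-x|^{-1/2}$ or $\langle x\rangle^{1-4b}$ on each subregion, reducing to the two model integrals $\int_{|x|\leq M}\langle x\rangle^{1-4b}dx\leq c(1+M)^{2-4b}$ and $\int_{|y|<L}|y|^{-1/2}dy\leq cL^{1/2}$---is the standard one for estimates of this type.
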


\subsection{Riemann-Liouville fractional integral operator}

For Re $\alpha>0,$ the tempered distribution $\frac{t_{+}^{\alpha-1}}{\Gamma(\alpha)}$ is defined as a locally integrable function by
$$
\left\langle\frac{t_{+}^{\alpha-1}}{\Gamma(\alpha)}, f\right\rangle:=\frac{1}{\Gamma(\alpha)} \int_{0}^{+\infty} t^{\alpha-1} f(t) {d} t.
$$
For Re $\alpha>0,$ integration by parts implies that
$$
\frac{t_{+}^{\alpha-1}}{\Gamma(\alpha)}=\partial_{t}^{k}\left(\frac{t_{+}^{\alpha+k-1}}{\Gamma(\alpha+k)}\right)
$$
for all $k \in \mathbb{N} .$ This expression allows to extend the definition, in the sense of distributions, of $\frac{t_{+}^{\alpha-1}}{\Gamma(\alpha)}$ to all $\alpha \in \mathbb{C}$
If $f \in C_{0}^{\infty}\left(\mathbb{R}^{+}\right),$ we define
$$
\mathcal{I}_{\alpha} f=\frac{t_{+}^{\alpha-1}}{\Gamma(\alpha)} * f
$$
Thus, for Re $\alpha>0$
$$
\mathcal{I}_{\alpha} f(t)=\frac{1}{\Gamma(\alpha)} \int_{0}^{t}(t-s)^{\alpha-1} f(s) \mathrm{d} s
$$
and notice that
$$
\mathcal{I}_{0} f=f, \quad \mathcal{I}_{1} f(t)=\int_{0}^{t} f(s) \mathrm{d} s, \quad \mathcal{I}_{-1} f=f^{\prime} \quad \text { and } \quad \mathcal{I}_{n} \mathcal{I}_{\beta}=\mathcal{I}_{\alpha+\beta}.
$$

\section{The Duhamel Boundary Forcing Operator}\label{section3}
We now introduce the   Duhamel boundary forcing operator similar to the introduced in \cite{Holmer}. For $f\in C_0^{\infty}(\mathbb{R}^+)$, define the boundary forcing operator
\begin{eqnarray*}
	\mathcal{L}_af(x,t)&=&-iC\int_0^te^{i(t-t')a\partial_x^2}\delta_0(x)\mathcal{I}_{-\frac{1}{2}}f(t')dt'\\
	&=&-i\frac{Ce^{-i\pi/4}}{2\sqrt{a\pi}}\int_0^t(t-t')^{-\frac{1}{2}}e^{\frac{ix^2}{4a(t-t')}}\mathcal{I}_{-\frac{1}{2}}f(t')dt'\\
		&=&\frac{C}{2 \sqrt{a\pi}} e^{i\frac{3\pi}{4}}\int_0^t(t-t')^{-\frac{1}{2}}e^{\frac{ix^2}{4a(t-t')}}\mathcal{I}_{-\frac{1}{2}}f(t')dt',
\end{eqnarray*}
where we have used the formula
\begin{equation*}
\mathcal{F}_x\left( \frac{e^{-i \frac{\pi}{4}\text{sgn}\ y} }{2|y|^{1/2}\sqrt{\pi}}e^{\frac{ix^2}{4y}}\right)(\xi)=e^{-iy\xi^2},\ \forall\ y\in \mathbb{R}.
\end{equation*}

For $\mbox{Re }\alpha>0$, we set
\begin{equation}\label{eq:IO}
\mathcal{I}_{\alpha}f(t)=\frac{1}{\Gamma(\alpha)}\int_0^t(t-s)^{\alpha-1}f(s) \; ds.
\end{equation}

From this definition, we see that
\begin{equation}\label{forcante00}
\begin{cases}
(i\partial_t+a\partial_x^2)\mathcal{L}_af(x,t)=C\delta_0(x)\mathcal{I}_{-\frac{1}{2}}f(t),& x,t\in\mathbb{R},\\
\mathcal{L}_af(x,0)=0,& x\in \R.
\end{cases}
\end{equation}

By choosing $C=2e^{-\frac34\pi i}\sqrt a$, we have that
$$\mathcal{L}_af(0,t)=f(t).$$

Thus, for $f\in C_0^{\infty}(\R^+)$, set $u(x,t)=e^{-iat\partial_x^2}\phi(x)+\mathcal{L}(f-e^{-iat\partial_x^2}\phi(x)\big|_{x=0})$. Then,  $u(x,t)$ is continuous in $x$. Thus $u(0,t)=f(t)$ and $u(x,t)$ solves the problem
\begin{equation}\label{forcante}
\begin{cases}
(i\partial_t+a\partial_x^2)u(x,t)=C\delta(x)\mathcal{I}_{-1/2}(f-e^{-iat\partial_x^2}\phi(x)\big|_{x=0}),& (x,t)\in\mathbb{R},\\
u(x,0)=\phi(x),& x\in\mathbb{R},\\
u(0,t)=f(t),& t\in \R.
\end{cases}
\end{equation}
This would suffice to solve the linear analogue of the half-line problem.

\section{The Duhamel boundary forcing operator classes associated to linear Schr\"odinger equation}\label{section4}

In order to get our results in a larger class of index regularity, we need to work with a class of boundary forcing operators in order to obtain the required estimates for the second-order derivative of traces. In this way, we define the generalization of operators $\mathcal{L}_a$.

For $\lambda \in \mathbb{C}$ such that Re $\lambda>-2$ and $f \in C_{0}^{\infty}\left(\mathbb{R}^{+}\right)$ define
$$
\mathcal{L}_a^{\lambda} f(x, t)=\left[\frac{x_{-}^{\lambda-1}}{\Gamma(\lambda)} * \mathcal{L}_a\left(\mathcal{I}_{-\frac{\lambda}{2}} f\right)(\cdot, t)\right](x)
$$
with $\frac{x_{-}^{\lambda-1}}{\Gamma(\lambda)}=\frac{(-x)_{+}^{\lambda-1}}{\Gamma(\lambda)}.$ 
These definition implies
\begin{equation}\label{forc}
\left(i \partial_{t}+a\partial_{x}^{2}\right) \mathcal{L}_{a}^{\lambda} f(x, t)=\frac{C}{\Gamma(\lambda)} x_{-}^{\lambda-1} \mathcal{I}_{-\frac{1}{2}-\frac{\lambda}{2}} f(t).
\end{equation}

If Re $\lambda>0,$ then
\begin{equation}\label{def0}
\mathcal{L}_a^{\lambda} f(x, t)=\frac{1}{\Gamma(\lambda)} \int_{x}^{+\infty}(y-x)^{\lambda-1} \mathcal{L}\left(\mathcal{I}_{-\frac{\lambda}{2}} f\right)(y, t) d y.
\end{equation}
For Re $\lambda>-2,$ using \eqref{forc} we obtain
\begin{equation}\label{alt}
\begin{aligned}
\mathcal{L}_{a}^{\lambda} f(x, t) &=\frac{1}{\Gamma(\lambda+2)} \int_{x}^{+\infty}(y-x)^{\lambda+1} \partial_{y}^{2} \mathcal{L}\left(\mathcal{I}_{-\frac{\Lambda}{2}} f\right)(y, t) d y \\
&=-\frac1a\int_{x}^{+\infty} \frac{(y-x)^{\lambda+1}}{\Gamma(\lambda+2)}\left(i \partial_{t} \mathcal{L} \mathcal{I}_{-\frac{\lambda}{2}} f\right)(y, t) d y+\frac{C}{a} \frac{x_{-}^{\lambda+1}}{\Gamma(\lambda+2)} \mathcal{I}_{-1 / 2-\lambda / 2} f(t)
\end{aligned}
\end{equation}
Notice that $\left.\frac{x_{\pm}^{\lambda-1}}{\Gamma(\lambda)}\right|_{\lambda=0}=\delta_{0},$ then $\mathcal{L}_{a}^{0} f(x, t)=\mathcal{L}_a f(x, t)$.

From Lemma \ref{alt} it follows that $\mathcal{L}_{a}^{\lambda} f(x, t)$ is well defined for $\lambda>-2$ for $t \in[0,1]$ Moreover, the dominated convergence theorem and Lemma 3.2 imply that, for fixed $t \in[0,1]$ and $\operatorname{Re} \lambda>-1,$ the function $\mathcal{L}^{\lambda} f(x, t)$ is continuous in $x$ for all $x \in \mathbb{R}$. The following result establishes the values of $\mathcal{L}_{\pm}^{\lambda} f(x, t)$ at $x=0$
\begin{lemma}\label{trace} If Re $\lambda>-1$ and $f \in C_{0}^{\infty}\left(\mathbb{R}^{+}\right),$ then $\mathcal{L}_{a}^{\lambda} f(0, t)=\sqrt{a}e^{i \frac{ \lambda \pi}{4}}f(t)$.\end{lemma}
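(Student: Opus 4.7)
The plan is to verify the identity first on the strip $0<\operatorname{Re}\lambda<2$ by a direct computation using the integral representation \eqref{def0}, and then extend to the full range $\operatorname{Re}\lambda>-1$ by analytic continuation, relying on the fact that the alternative representation \eqref{alt} makes $\mathcal{L}_a^{\lambda}f(0,t)$ holomorphic in $\lambda$ on $\operatorname{Re}\lambda>-2$ while the claimed expression $a^{\lambda/2}e^{i\lambda\pi/4}f(t)$ is entire in $\lambda$.

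Assume $0<\operatorname{Re}\lambda<2$. Setting $x=0$ in \eqref{def0} and substituting the explicit Duhamel kernel for $\mathcal{L}_a$ from Section \ref{section3}, together with the semigroup identity $\mathcal{I}_{-1/2}\mathcal{I}_{-\lambda/2}=\mathcal{I}_{-(\lambda+1)/2}$, yields
\[
\mathcal{L}_a^{\lambda}f(0,t)=\frac{1}{\Gamma(\lambda)\sqrt{\pi}}\int_0^{\infty}\!\int_0^{t} y^{\lambda-1}(t-t')^{-1/2}e^{iy^{2}/(4a(t-t'))}\mathcal{I}_{-(\lambda+1)/2}f(t')\,dt'\,dy.
\]
The next step is to swap the order of integration between $y$ and $t'$. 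Since the $y$-integrand is both oscillatory and singular at the origin, I would justify the swap by inserting a Gaussian damping factor $e^{-\varepsilon y^{2}}$, applying Fubini in that absolutely convergent setting, and then passing to the limit $\varepsilon\to 0^{+}$; the compactness of $\operatorname{supp}f$, the smoothness of $f$, and the strip condition $0<\operatorname{Re}\lambda<2$ provide the uniform control needed.

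The inner $y$-integral then reduces, via the substitution $w=y^{2}/(4a(t-t'))$, to the Fresnel-type identity
\[
\int_0^{\infty}w^{s-1}e^{iw}\,dw=\Gamma(s)e^{i\pi s/2},\qquad 0<\operatorname{Re}s<1,
\]
taken with $s=\lambda/2$, producing the spatial factor $\tfrac{1}{2}(4a(t-t'))^{\lambda/2}\Gamma(\lambda/2)e^{i\lambda\pi/4}$. The remaining $t'$-integration collapses by the semigroup law for the Riemann-Liouville operators,
\[
\int_0^{t}(t-t')^{(\lambda-1)/2}\mathcal{I}_{-(\lambda+1)/2}f(t')\,dt'=\Gamma\!\left(\tfrac{\lambda+1}{2}\right)\mathcal{I}_{(\lambda+1)/2}\mathcal{I}_{-(\lambda+1)/2}f(t)=\Gamma\!\left(\tfrac{\lambda+1}{2}\right)f(t).
\]
Collecting the constants and applying Legendre's duplication formula $\Gamma(\lambda/2)\Gamma((\lambda+1)/2)=2^{1-\lambda}\sqrt{\pi}\,\Gamma(\lambda)$ produces an exact cancellation of the prefactor $2^{\lambda-1}/(\Gamma(\lambda)\sqrt{\pi})$, leaving the stated boundary value $a^{\lambda/2}e^{i\lambda\pi/4}f(t)$.

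The step I expect to be the most delicate is the Fubini justification, due to the combination of the conditionally convergent oscillatory integral in $y$ with the boundary singularity $y^{\lambda-1}$; the $\varepsilon$-regularization sketched above, supplemented by standard stationary-phase-type tail decay of $\int_A^{\infty}y^{\lambda-1}e^{iy^{2}/c}\,dy$ as $A\to\infty$, should handle it cleanly. Once the identity is established on the open strip $0<\operatorname{Re}\lambda<2$, analytic continuation using the representation \eqref{alt} yields the full range $\operatorname{Re}\lambda>-1$ required by Lemma \ref{trace}.
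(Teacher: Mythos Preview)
Your approach mirrors the paper's own proof essentially step for step: both establish the identity on the strip $0<\operatorname{Re}\lambda<2$ via the representation \eqref{def0}, evaluate the $y$-integral through the substitution $r=y^{2}/(4a(t-t'))$ and the Fresnel/contour-rotation identity $\int_0^\infty r^{\lambda/2-1}e^{ir}\,dr=e^{i\pi\lambda/4}\Gamma(\lambda/2)$, apply Legendre duplication, recognize the remaining $t'$-integral as $\mathcal{I}_{(\lambda+1)/2}\mathcal{I}_{-(\lambda+1)/2}f=f$, and then extend by analytic continuation using \eqref{alt}. One remark: your computation correctly yields the prefactor $a^{\lambda/2}$ rather than the $\sqrt{a}$ appearing in the lemma as stated---the latter is a typo in the paper (at $\lambda=0$ one must recover $\mathcal{L}_a f(0,t)=f(t)$ from Section~\ref{section3}, which forces the constant to be $a^{\lambda/2}$, and indeed the paper's own intermediate step for $I$ drops a factor of $a^{(\lambda-1)/2}(t-t')^{(\lambda-1)/2}$).
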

\begin{proof}
	By using \eqref{alt}, we have that
	$$\mathcal{L}_a^{\lambda} f(0, t)=-\frac{1}{a}\int_{0}^{+\infty} \frac{y^{\lambda+1}}{\Gamma(\lambda+2)} i \partial_{t} \mathcal{L}_{a}\left(\mathcal{I}_{-\frac{\lambda}{2}} f\right)(y, t) d y.$$
	By complex differentiation under the integral sign,  we have that $\mathcal{L}_a^{\lambda} f(0, t)$ is analytic
	in $\lambda,$ for $\operatorname{Re} \lambda>-1 .$ By analyticity, we shall only compute $\mathcal{L}_a^{\lambda} f(0, t)$ for $0<\lambda<2$.
	For the computation in the range $0<\lambda<2,$ we use the representation \eqref{def0} to obtain 
	$$
	\begin{aligned}
	\mathcal{L}_a^{\lambda} f(0, t)=& \frac{1}{\Gamma(\lambda)} \int_{0}^{+\infty} y^{\lambda-1} \mathcal{L}_a\left(\mathcal{I}_{-\frac{\lambda}{2}} f\right)(y, t) d y \\
=	& \frac{1}{\Gamma(\lambda)} \int_{0}^{+\infty} y^{\lambda-1} \frac{1}{\sqrt{\pi}} \int_{0}^{t}\left(t-t^{\prime}\right)^{-\frac{1}{2}} e^{\frac{i y^{2}}{4a\left(t-t^{\prime}\right)}} \mathcal{I}_{-\frac{\lambda}{2}-\frac{1}{2}} f\left(t^{\prime}\right) d t^{\prime} d y \\
	=& \frac{1}{\Gamma(\lambda)} \int_{0}^{t}\left(t-t^{\prime}\right)^{-\frac{1}{2}} \mathcal{I}_{-\frac{1}{2}-\frac{\lambda}{2}} f\left(t^{\prime}\right) \int_{0}^{+\infty} y^{\lambda-1} \frac{1}{\sqrt{\pi}} e^{\frac{i y^{2}}{4a (t-t^{\prime})}} d y d t^{\prime}
	\end{aligned}
	$$
	Set $I=\int_{0}^{+\infty} y^{\lambda-1} e^{\frac{i y^{2}}{4a(t-t)}} d y .$ Changing variables $r=\frac{y^{2}}{4a\left(t-t^{\prime}\right)},$ then $y=r^{\frac{1}{2}} 2\sqrt{a}\left(t-t^{\prime}\right)^{\frac{1}{2}}$ and
	$d y=r^{-\frac{1}{2}}\sqrt{a}\left(t-t^{\prime}\right)^{\frac{1}{2}} d r,$ we get
	$$
	I=\int_{0}^{+\infty}\left[r^{\frac{1}{2}} 2\sqrt{a}\left(t-t^{\prime}\right)^{\frac{1}{2}}\right]^{\lambda-1} r^{-\frac{1}{2}} e^{i r}\left(t-t^{\prime}\right)^{\frac{1}{2}} d r=2^{\lambda-1}\sqrt{a}\left(t-t^{\prime}\right)^{\frac{1}{2}} \int_{0}^{+\infty} r^{\frac{\lambda}{2}-1} e^{i r} d r
	$$
	By a change of contour,
	$$
	I=2^{\lambda-1}\sqrt{a}\left(t-t^{\prime}\right)^{\frac{1}{2}} i^{\frac{\lambda}{2}} \int_{0}^{+\infty} r^{\frac{\lambda}{2}-1} e^{-r} d r=2^{\lambda-1}\sqrt{a}\left(t-t^{\prime}\right)^{\frac{1}{2}} i^{\frac{\lambda}{2}} \Gamma\left(\frac{\lambda}{2}\right), \text { for } \lambda \in(0,2)
	$$
	Using the formula
	$$
	\frac{\Gamma\left(\frac{\lambda}{2}\right)}{\Gamma(\lambda)}=\frac{2^{1-\lambda} \sqrt{\pi}}{\Gamma\left(\frac{\lambda}{2}+\frac{1}{2}\right)}
	$$
	for $\lambda \in \mathbb{R}^{+},$ we obtain
	$$
	\begin{aligned}
	\mathcal{L}_a^{\lambda} f(0, t) &=\frac{2^{\lambda-1}\sqrt{a}}{\sqrt{\pi}} \frac{\Gamma\left(\frac{\lambda}{2}\right)}{\Gamma(\lambda)} i^{\frac{ \lambda}{2}} \int_{0}^{t}\left(t-t^{\prime}\right)^{\frac{\lambda}{2}-\frac{1}{2}} \mathcal{I}_{-\frac{\lambda}{2}-\frac{1}{2}} f\left(t^{\prime}\right) d t^{\prime} \\
	&=\frac{\sqrt{a}}{\Gamma\left(\frac{\lambda}{2}+\frac{1}{2}\right)} i^{\frac{ \lambda}{2}} \int_{0}^{t}\left(t-t^{\prime}\right)^{\frac{\lambda}{2}-\frac{1}{2}} \mathcal{I}_{-\frac{\lambda}{2}-\frac{1}{2}} f\left(t^{\prime}\right) d t^{\prime}=i^{\lambda/2}\sqrt{a} \mathcal{I}_{\frac{\lambda}{2}+\frac{1}{2}} \mathcal{I}_{-\frac{\lambda}{2}-\frac{1}{2}} f(t)=\sqrt{a}e^{i \frac{3 \lambda \pi}{4}} f(t).
	\end{aligned}
	$$
	
	\end{proof}

We finish this section with some estimates for the Duhamel boundary forcing operator class $\mathcal{L}_a$, whose proof is similar to proof of Lemma 6.2 in \cite{cavalcante}.
\begin{lemma}\label{estimate} Let $s \in \mathbb{R}$ and $f \in C_{0}^{\infty}\left(\mathbb{R}^{+}\right) .$ The following estimates are valid:
\begin{itemize}
	\item[(a)] (Space traces) $\left\|\mathcal{L}_{a}^{\lambda} f(x, t)\right\|_{\mathcal{C}\left(\mathbb{R}_{t} ; H^{s}\left(\mathbb{R}_{x}^{+}\right)\right)} \leq c\|f\|_{H_{0}^{(2 s+1) / 4}\left(\mathbb{R}^{+}\right)}$ whenever $s-\frac{3}{2}<$
$\lambda<\min \left\{s+\frac{1}{2}, \frac{1}{2}\right\}$ and $\operatorname{supp}(f) \subset[0,1]$.
\item[(b)] (Time traces) $\left\|\psi(t) \mathcal{L}_{a}^{\lambda} f(x, t)\right\|_{\mathcal{C}\left(\mathbb{R}_{x} ; H_{0}^{(2 s+1) / 4}\left(\mathbb{R}_{t}^{+}\right)\right)} \leq c\|f\|_{H_{0}^{(2 s+1) / 4}\left(\mathbb{R}^{+}\right)}$ whenever
$-1<\lambda<1$
\item[(c)](Bourgain spaces) $\left\|\psi(t) \mathcal{L}_{a}^{\lambda} f(x, t)\right\|_{X^{s, b}} \leq c\|f\|_{H_{0}^{(2 s+1) / 4}\left(\mathbb{R}^{+}\right)}$ whenever $s-\frac{1}{2}<$
$\lambda<\min \left\{s+\frac{1}{2}, \frac{1}{2}\right\}$ and $b<\frac{1}{2}$.
\end{itemize}
\end{lemma}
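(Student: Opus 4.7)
The plan is to follow the strategy of Lemma 6.2 in \cite{cavalcante}, carrying the dispersion parameter $a>0$ through the argument (which only affects the implicit constants). The common starting point for all three parts is an explicit Fourier representation: combining the tempered-distribution Fourier transform of $x_-^{\lambda-1}/\Gamma(\lambda)$ (a symbol of size $|\xi|^{-\lambda}$ modulated by a $\xi$-dependent phase) with the explicit kernel of $\mathcal{L}_a$ yields
\begin{equation*}
\mathcal{F}_x\mathcal{L}_a^\lambda f(\xi,t) \;=\; c_\lambda |\xi|^{-\lambda}e^{i\theta(\xi,\lambda)}\int_0^t e^{-i(t-t')a\xi^2}\mathcal{I}_{-\frac{1}{2}-\frac{\lambda}{2}}f(t')\,dt'.
\end{equation*}
This identity is the main object from which the three target norms are estimated.

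For part (c), I would apply the time Fourier transform, resolve the Duhamel integral (which produces a $(\tau+a\xi^2)^{-1}$ denominator on the Fourier side), insert the Bourgain weight $\langle\xi\rangle^{2s}\langle\tau+a\xi^2\rangle^{2b}$, and split the $(\xi,\tau)$-plane into $|\xi|\lesssim 1$ versus $|\xi|\gtrsim 1$ and, within each, into subregions according to the relative size of $\tau$ and $a\xi^2$. The hypothesis $b<\tfrac{1}{2}$ is exactly what ensures integrability of $\langle\tau+a\xi^2\rangle^{2b-2}$ in $\tau$, i.e., what tames the Duhamel denominator; the range $s-\tfrac{1}{2}<\lambda<\min\{s+\tfrac{1}{2},\tfrac{1}{2}\}$ reconciles the factor $|\xi|^{-\lambda}$ with the Sobolev weight at high frequency and with the low-frequency integrability after using the $(2s+1)/4$ time regularity of $f$ to absorb the action of $\mathcal{I}_{-\frac{1}{2}-\frac{\lambda}{2}}$. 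Part (a) then follows from the same frequency decomposition performed at fixed $t$ against $H^s(\mathbb{R}^+_x)$; since no $\tau$-integration is paid for, the lower bound on $\lambda$ relaxes to $\lambda>s-\tfrac{3}{2}$, while the upper bound $\lambda<\min\{s+\tfrac12,\tfrac12\}$ is unchanged.

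For part (b), I would use the alternative representation \eqref{alt}, which trades $x_-^{\lambda-1}/\Gamma(\lambda)$ for the smoother $(y-x)^{\lambda+1}/\Gamma(\lambda+2)$ paired with a $\partial_t$ derivative of $\mathcal{L}_a\mathcal{I}_{-\lambda/2}f$; this extends the analyticity of $x\mapsto\mathcal{L}_a^\lambda f(x,\cdot)$ in $\lambda$ down to $\lambda>-1$. Combining this with Lemma \ref{trace} (which fixes the boundary trace at $x=0$ as $\sqrt{a}\,e^{i\lambda\pi/4}f(t)$) and with the Kato time-smoothing estimate for the free Schr\"odinger group recalled in Section \ref{2}, one obtains the $H_0^{(2s+1)/4}$-bound uniformly in $x\in\mathbb{R}$ throughout the range $-1<\lambda<1$.

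The main obstacle is part (c): the singular factor $|\xi|^{-\lambda}$ near $\xi=0$ must be handled simultaneously with the Duhamel denominator $(\tau+a\xi^2)^{-1}$ and the Bourgain weight $\langle\tau+a\xi^2\rangle^{2b}$ without losing the endpoint $b<\tfrac{1}{2}$, which is sharp in this framework. All the integrability computations ultimately reduce to elementary bounds of the type collected in Lemmas \ref{lemagtv}--\ref{Hol}, which is exactly why the proof can be arranged to parallel the one given in \cite{cavalcante}.
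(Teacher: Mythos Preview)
Your proposal is correct and takes essentially the same approach as the paper: the paper itself does not write out a proof but simply refers to Lemma~6.2 in \cite{cavalcante}, and your sketch is precisely an outline of that argument with the parameter $a$ carried through. The Fourier representation, the frequency splitting for parts (a) and (c), and the use of \eqref{alt} together with Lemma~\ref{trace} for part (b) are exactly the ingredients of the cited proof.
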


\begin{remark}
	In order to all sentences of Lemma \eqref{estimate} does work it is necessary that the following restrictions of indexes 
	$$\left\{-1,s-\frac12\right\}<\lambda<\min\left\{s+\frac12,\frac12\right\}.$$ Then in the case of the operator $\mathcal{L}_a$ associated to the index $\lambda=0$, these estimates are valid on the set regularity index $-\frac12<s<\frac12$, then the use of the more general classes of boundary operators $\mathcal{L}_a^{\lambda}$ is fundamental to get results in a more larger region.
	\end{remark}

\section{Bilinear estimates for the coupling terms}\label{bilinear}

 The main results in this section are the following lemmas with the bilinear estimates for different values of $a>0$.  Each case lead us to different restrictions on the Sobolev index $s$ and $\kappa$.   

\begin{lemma}\label{l1}
	 Consider $b,d\in(3/8,\ 1/2)$. Then we have the following inequality 
	\begin{equation}\label{desigualdade1}
	\left\|\overline{u}\cdot v \right\|_{X^{\kappa,-d}}\leq c \left\|u \right\|_{X_a^{\kappa,b}}\cdot \left\|v \right\|_{X^{s,b}}, 
	\end{equation}
 in the following cases:
	\begin{itemize}
		\item $0<a<\frac12$ $(\sigma >2)$ and $|\kappa|-s<1$;
		\item $ a>\frac12$ $(0<\sigma<2)$ and $|\kappa|-s\leq\frac12$;
		\item $a=\frac12$ $(\sigma=2)$ and  $|\kappa|\leq s$.
	\end{itemize}
	
\end{lemma}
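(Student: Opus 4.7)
The plan is to pass to Fourier variables and reduce \eqref{desigualdade1} to a weighted trilinear $L^{2}$ inequality, and then to split the region of integration according to which modulation dominates. Setting
\begin{equation*}
F(\tau_{1},\xi_{1}) = \langle\xi_{1}\rangle^{\kappa}\langle\tau_{1}-\xi_{1}^{2}\rangle^{b}\widehat{\bar u}(\tau_{1},\xi_{1}), \qquad G(\tau_{2},\xi_{2})=\langle\xi_{2}\rangle^{s}\langle\tau_{2}+a\xi_{2}^{2}\rangle^{b}\widehat v(\tau_{2},\xi_{2}),
\end{equation*}
so that $\|F\|_{L^{2}}$ and $\|G\|_{L^{2}}$ equal the Bourgain norms on the right of \eqref{desigualdade1}, Plancherel and duality turn \eqref{desigualdade1} into the assertion that, for every $H\in L^{2}$,
\begin{equation*}
\left| \int \frac{\langle\xi\rangle^{\kappa}\,H(\tau,\xi)\,F(\tau_{1},\xi_{1})\,G(\tau-\tau_{1},\xi-\xi_{1})}{\langle\tau+\xi^{2}\rangle^{d}\langle\xi_{1}\rangle^{\kappa}\langle\xi-\xi_{1}\rangle^{s}\langle\tau_{1}-\xi_{1}^{2}\rangle^{b}\langle\tau-\tau_{1}+a(\xi-\xi_{1})^{2}\rangle^{b}} \,d\tau_{1}\,d\xi_{1}\,d\tau\,d\xi \right|\leq c\,\|F\|_{L^{2}}\|G\|_{L^{2}}\|H\|_{L^{2}}.
\end{equation*}

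Next, using the resonance identity $\omega-\omega_{1}-\omega_{2}=\xi^{2}+\xi_{1}^{2}-a\xi_{2}^{2}$ (with $\xi_{2}=\xi-\xi_{1}$, $\tau_{2}=\tau-\tau_{1}$) recalled in \eqref{dispersion}, I would decompose the domain of integration according to which of the three modulations $M:=\langle\tau+\xi^{2}\rangle$, $M_{1}:=\langle\tau_{1}-\xi_{1}^{2}\rangle$, $M_{2}:=\langle\tau_{2}+a\xi_{2}^{2}\rangle$ is the largest, exploiting that $\max(M,M_{1},M_{2})\gtrsim 1+|\xi^{2}+\xi_{1}^{2}-a\xi_{2}^{2}|$ on each piece. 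In each subregion Cauchy--Schwarz in two of the four variables reduces matters to estimating the $L^{\infty}$ norm, in the remaining ``frozen'' pair of variables, of a concrete integral in the other two. Performing first the $\tau$- (or $\tau_{1}$-) integration through Lemma \ref{lemagtv} replaces the two lower modulation weights by a single factor $\langle\xi^{2}+\xi_{1}^{2}-a\xi_{2}^{2}\rangle^{-(4b-1)}$, after which the remaining frequency integral is estimated by Lemma \ref{lemanovo} when the resonance function is fully coercive, and by Lemma \ref{Hol} when it is only linear in the relevant variable.

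The size of the resonance function then drives the three regimes appearing in the statement. For $0<a<1/2$ the full coercive bound $|\xi^{2}+\xi_{1}^{2}-a\xi_{2}^{2}|\geq (1-2a)(\xi^{2}+\xi_{1}^{2})$ gives a simultaneous gain in both high frequencies, producing the mildest condition $|\kappa|-s<1$. For $a>1/2$ the factorisation $|\xi^{2}+\xi_{1}^{2}-a\xi_{2}^{2}|\geq 2|\xi-\mu_{a}\xi_{2}||\xi-(1-\mu_{a})\xi_{2}|$ provides only a linear gain in the generic $\xi$, tightening the threshold to $|\kappa|-s\leq 1/2$. In the resonant case $a=1/2$ the direct identity $\xi^{2}+\xi_{1}^{2}-\tfrac{1}{2}\xi_{2}^{2}=\tfrac{1}{2}(\xi+\xi_{1})^{2}$ shows that the resonance function vanishes along the entire line $\xi_{1}=-\xi$ (i.e.\ $\xi_{2}=-2\xi_{1}$); on this curve no modulation gain is available, so that the frequency weight $\langle\xi\rangle^{\kappa}\langle\xi_{1}\rangle^{-\kappa}\langle\xi_{2}\rangle^{-s}$ must be bounded in isolation, which forces $|\kappa|\leq s$.

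The main obstacle I expect is the resonant case $a=1/2$, where the absence of a coercive modulation bound demands that the entire estimate be obtained from a careful dyadic tracking of the frequency weights along the near-resonant curve $\xi=-\xi_{1}$. A secondary technical difficulty is the case $a>1/2$: because the resonance function then factors through two distinct zero lines for $\xi$, the $\xi$-integration has to be localised near each factor separately, using the singular-kernel bound of Lemma \ref{Hol}. The case $0<a<1/2$ should be the most tractable, since the coercive control in both high frequencies leaves ample room for the elementary integral lemmas to close.
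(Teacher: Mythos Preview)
Your strategy---duality, splitting by dominant modulation, Cauchy--Schwarz to reduce to $L^\infty$ bounds on one-dimensional integrals, and then the elementary lemmas---is exactly the route the paper takes, and your identification of the resonance relations in the three regimes is correct. Two points of comparison are worth noting. First, you have inverted the difficulty ordering: in the paper the case $a=\tfrac12$ is by far the \emph{easiest}, disposed of in two lines. One simply takes $\mathcal{R}_1=\mathbb{R}^4$ (no modulation decomposition at all), observes that $|\kappa|\le s$ makes the frequency weight $\langle\xi_2\rangle^{2|\kappa|-2s}$ bounded, and applies Lemma~\ref{lemanovo} directly to the remaining quadratic-in-$\xi_2$ integral; no dyadic tracking near the resonant curve is needed. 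Second, for $a>\tfrac12$ the paper does not localise near each zero line and invoke Lemma~\ref{Hol} as you propose; instead it observes that the two roots $\mu_a\xi_2$ and $(1-\mu_a)\xi_2$ are separated by $\sqrt{2a-1}\,|\xi_2|$, so at least one factor is $\gtrsim|\xi_2|$, and then performs a change of variable $\eta=\tau-(a-1)\xi_2^2-2\xi\xi_2+\xi^2$ whose Jacobian is bounded below by $c|\xi_2|$. This buys exactly one power of $|\xi_2|^{-1}$ and accounts for the threshold $|\kappa|-s\le\tfrac12$. Your route via Lemma~\ref{Hol} would likely also close, but the paper's change-of-variable argument is cleaner and avoids the singular-kernel bookkeeping.
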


\begin{lemma}\label{l2}
		Consider $b,d\in(3/8,\ 1/2)$. Then we have the following inequality
		\begin{equation}\label{desigualdade2}
			\left\|u\cdot \tilde{u} \right\|_{X^{s,-d}_a}\leq c \left\|u \right\|_{X^{\kappa,b}}\cdot \left\|\tilde{u} \right\|_{X^{\kappa,b}},
		\end{equation}
		 in the following cases:
		 \begin{itemize}
		 	\item $0<a<\frac12$ $(\sigma >2)$ and $s< \min\left\lbrace \kappa +1,\  \ 2\kappa +1\right\rbrace$;
		 	\item $ a>\frac12$ $(0<\sigma<2)$ and $s\leq \min\left\lbrace \kappa +1/2,\  \ 2\kappa +1/2\right\rbrace $;
		 	\item $a=\frac12$ $(\sigma=2)$ and $0 \leq s\leq \kappa $. 
		 \end{itemize}
		 
\end{lemma}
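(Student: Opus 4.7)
The estimate is of the same shape as the standard bilinear Bourgain-space estimate for a quadratic Schr\"odinger nonlinearity, so I will follow the Kenig--Ponce--Vega / Ginibre--Tsutsumi--Velo scheme: Plancherel and duality reduce matters to a trilinear weighted integral inequality, then a careful analysis of the dispersion relation yields the required gain. Concretely, setting
\begin{equation*}
f(\tau_1,\xi_1)=\langle\xi_1\rangle^{\kappa}\langle\tau_1+\xi_1^2\rangle^{b}\hat u,\quad g(\tau_2,\xi_2)=\langle\xi_2\rangle^{\kappa}\langle\tau_2+\xi_2^2\rangle^{b}\hat{\tilde u},\quad h(\tau,\xi)=\langle\xi\rangle^{-s}\langle\tau+a\xi^{2}\rangle^{d}\bar{\hat w},
\end{equation*}
with $\xi=\xi_1+\xi_2$, $\tau=\tau_1+\tau_2$, the desired bound is equivalent to
\begin{equation*}
J:=\Int\!\!\Int \frac{\langle\xi\rangle^{s}\,f(\tau_1,\xi_1)\,g(\tau_2,\xi_2)\,h(\tau,\xi)}{\langle\xi_1\rangle^{\kappa}\langle\xi_2\rangle^{\kappa}\langle\sigma_0\rangle^{d}\langle\sigma_1\rangle^{b}\langle\sigma_2\rangle^{b}}\,d\xi_1\,d\xi_2\,d\tau_1\,d\tau_2 \;\lesssim\; \|f\|_{L^2}\|g\|_{L^2}\|h\|_{L^2},
\end{equation*}
where $\sigma_0=\tau+a\xi^{2}$, $\sigma_1=\tau_1+\xi_1^{2}$, $\sigma_2=\tau_2+\xi_2^{2}$.

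The dispersive gain comes from the algebraic identity
\begin{equation*}
\sigma_0-\sigma_1-\sigma_2 \;=\; a\xi^{2}-\xi_1^{2}-\xi_2^{2} \;=:\; Q_a(\xi_1,\xi_2),\qquad Q_a=(a-1)(\xi_1^{2}+\xi_2^{2})+2a\,\xi_1\xi_2,
\end{equation*}
which forces $\max(\langle\sigma_0\rangle,\langle\sigma_1\rangle,\langle\sigma_2\rangle)\gtrsim |Q_a(\xi_1,\xi_2)|$. I would then split $J$ into three pieces $J_0,J_1,J_2$ according to which modulation is largest. In the region $J_0$ where $\langle\sigma_0\rangle$ dominates, I apply Cauchy--Schwarz in $(\tau_1,\tau_2)$ after integrating out $\tau$ using the convolution constraint, and reduce to a one-dimensional $\tau_1$-integral controlled by Lemma~\ref{lemagtv}; what remains is an $L^2_{\xi_1,\xi_2}$ bound on the multiplier $\langle\xi\rangle^{s}\langle\xi_1\rangle^{-\kappa}\langle\xi_2\rangle^{-\kappa}|Q_a|^{-1/2+}$. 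In $J_1$ (and symmetrically $J_2$) I dualize: I apply Cauchy--Schwarz in $(\tau,\xi_2)$, perform the $\tau$-integral by Lemma~\ref{lemagtv}, then the resulting $\xi_2$-integral (after a change of variable $\mu=Q_a(\xi_1,\xi_2)$ that freezes $\xi_1$) is handled by Lemma~\ref{Hol}, the Jacobian $|\partial_{\xi_2}Q_a|$ being exactly what produces the derivative gain.

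The regularity thresholds are then read off from the behaviour of $Q_a$, and this is where the case distinction in the statement enters. For $0<a<1/2$ one has the coercive lower bound $|Q_a|\gtrsim \xi_1^{2}+\xi_2^{2}$, so the dispersion provides a full derivative and the high-low region $|\xi|\sim|\xi_1|\gg|\xi_2|$ (resp.\ the high-high region) imposes only $s<\kappa+1$ (resp.\ $s<2\kappa+1$). For $a>1/2$ one has the factorization $Q_a=(a-1)(\xi_1-\alpha_{+}\xi_2)(\xi_1-\alpha_{-}\xi_2)$ with real distinct roots $\alpha_{\pm}=\frac{-a\pm\sqrt{2a-1}}{a-1}$, yielding only half a derivative of smoothing because $|\partial_{\xi_2}Q_a|$ vanishes to first order on two lines; this produces the constraints $s\le\kappa+\tfrac12$ and $s\le 2\kappa+\tfrac12$. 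For the mass-resonant case $a=1/2$, the identity collapses to $Q_{1/2}=-\tfrac12(\xi_1-\xi_2)^{2}$, which vanishes on the diagonal $\xi_1=\xi_2$; no derivative can be recovered and one is forced to $s\le\kappa$, with the argument reducing essentially to an $L^{4}_{t,x}$ Strichartz bound combined with $\langle\xi\rangle^{s}\lesssim\langle\xi_1\rangle^{\kappa}\langle\xi_2\rangle^{\kappa}$ on the relevant regions.

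The main obstacle is therefore the resonant case $a=1/2$ together with the high-high configuration $\xi_1\approx\xi_2$ with $|\xi|\sim|\xi_1|$: there $Q_{1/2}$ provides no help and the proof must rely solely on the interplay between the weights $\langle\xi\rangle^{s}$ and $\langle\xi_i\rangle^{\kappa}$, which is what pins down the sharp restriction $s\le\kappa$. A secondary technical difficulty in the non-resonant cases is the integration of $|Q_a|^{-\theta}$ near its zero set for $a>1/2$: the change of variable $\mu=Q_a$ is degenerate when $\xi_2$ crosses one of the lines $\xi_1=\alpha_{\pm}\xi_2$, and one must use Lemma~\ref{Hol} carefully (keeping $\theta$ strictly less than $\tfrac12$, hence the choice $b,d\in(3/8,1/2)$) to keep the $\xi_2$-integral finite while still absorbing the remaining $\langle\xi\rangle^{2s}$ weight.
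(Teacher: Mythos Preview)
Your plan is correct and follows essentially the same route as the paper: duality reduces the estimate to bounding three one-variable integrals (the paper's $J_4,J_5,J_6$, your $J_0,J_1,J_2$) according to which modulation $\langle\sigma_i\rangle$ dominates, and the resonance function $Q_a=a\xi^2-\xi_1^2-\xi_2^2$ is analyzed exactly as you describe in each regime of $a$. The only cosmetic difference is that for $a>\tfrac12$ the paper, instead of invoking Lemma~\ref{Hol} near the zero set of $Q_a$, splits the frequency plane into two half-spaces $\mathcal{B}_2,\mathcal{B}_3$ on each of which the Jacobian $|\partial_{\xi_2}Q_a|\gtrsim|\xi|$ is nondegenerate, and for $a=\tfrac12$ it bounds $J_4$ directly via $\langle\xi\rangle^{2s-2\kappa}\le c$ and Lemma~\ref{lemanovo} rather than appealing to an $L^4$ Strichartz bound; the paper also spells out the sub-case $\kappa<0$ (your ``high-high region'') by a further decomposition in the signs and relative sizes of $\xi_1,\xi_2$.
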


The following Lemma, which the first one will be proved on the appendix, is need to complete the problem in a  more larger set of regularity.

\begin{lemma}\label{lw1}
	There exist $b=b(s,\kappa)<\frac12$ and $d=d(s,\kappa)<\frac12$ such that holds the following inequality
	\begin{equation}\label{desigualdade1w}
	\left\|\overline{u}\cdot v \right\|_{W^{\kappa,-d}}\leq c \left\|u \right\|_{X^{\kappa,b}}\cdot \left\|v \right\|_{X_a^{s,b}}, 
	\end{equation}
	in the following cases:
	\begin{itemize}
		\item $0<a<\frac12$ $(\sigma >2)$:\begin{equation*}
		 \begin{split}
		 &\{(s,\kappa)\in \R^2;\frac12<\kappa\leq2d,\  s\geq -\frac12,\  \text{and}\  |\kappa|-s<1\}\\ &\{(s,\kappa)\in \R^2;\kappa\leq-\frac12,  \text{and}\  s-|\kappa|\leq 4b\};
		 \end{split}
		 \end{equation*}
		\item $ a>\frac12$ $(0<\sigma<2)$, $\frac12<\kappa<2d$ and $|\kappa|-s\leq\frac12$;
		\item $a=\frac12$ $(\sigma=2)$, $\kappa>\frac12$ and  $|\kappa|\leq s$.
	\end{itemize}
	
\end{lemma}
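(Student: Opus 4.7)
The strategy would be the standard Fourier restriction / duality method for bilinear Bourgain estimates, adapted here to the $W$-type norm. Unfolding the $W^{\kappa,-d}$ norm via Plancherel and dualizing against a unit vector $w\in L^{2}_{\xi,\tau}$, with $f(\xi,\tau)=\langle\xi\rangle^{\kappa}\langle\tau-\xi^{2}\rangle^{b}\widehat{u}(\xi,\tau)$ and $g(\xi,\tau)=\langle\xi\rangle^{s}\langle\tau+a\xi^{2}\rangle^{b}\widehat{v}(\xi,\tau)$ (the weight on $\widehat{u}$ being transported to $\langle\tau-\xi^{2}\rangle$ because $\widehat{\,\overline{u}\,}(\xi,\tau)=\overline{\widehat{u}(-\xi,-\tau)}$), the estimate \eqref{desigualdade1w} reduces to a trilinear inequality of the shape
\[
\iiint \frac{\langle\tau\rangle^{\kappa/4}\langle\tau+\xi^{2}\rangle^{-d}\,|f(\xi_{1},\tau_{1})|\,|g(\xi_{2},\tau_{2})|\,|w(\xi,\tau)|}{\langle\xi_{1}\rangle^{\kappa}\langle\xi_{2}\rangle^{s}\langle\tau_{1}-\xi_{1}^{2}\rangle^{b}\langle\tau_{2}+a\xi_{2}^{2}\rangle^{b}}\,d\xi_{1}d\tau_{1}d\xi_{2}d\tau_{2}\;\lesssim\;\|f\|_{L^{2}}\|g\|_{L^{2}}\|w\|_{L^{2}},
\]
with $\xi=\xi_{1}+\xi_{2}$ and $\tau=\tau_{1}+\tau_{2}$.

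Next I would introduce the modulations $\omega=\tau+\xi^{2}$, $\omega_{1}=\tau_{1}-\xi_{1}^{2}$, $\omega_{2}=\tau_{2}+a\xi_{2}^{2}$ and invoke the resonance identity $\omega-\omega_{1}-\omega_{2}=\xi^{2}+\xi_{1}^{2}-a\xi_{2}^{2}$ from \eqref{dispersion}, so $\max(\langle\omega\rangle,\langle\omega_{1}\rangle,\langle\omega_{2}\rangle)\gtrsim|\xi^{2}+\xi_{1}^{2}-a\xi_{2}^{2}|$. The three parameter regimes of the lemma correspond precisely to the three regimes of this right-hand side: for $0<a<\frac{1}{2}$ the bound $|\xi^{2}+\xi_{1}^{2}-a\xi_{2}^{2}|\ge(1-2a)(\xi^{2}+\xi_{1}^{2})$ recalled in the introduction controls all three frequencies; for $a>\frac{1}{2}$ the factorization $2|\xi-\mu_{a}\xi_{2}||\xi-(1-\mu_{a})\xi_{2}|$ controls only a product; and at $a=\frac{1}{2}$ there is no useful lower bound, which is why the lemma imposes the extra restriction $\kappa\le s$. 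In each regime I would split the integration by which modulation is maximal, apply Cauchy--Schwarz in two of the dual variables, and reduce to a uniform weighted estimate handled by Lemmas \ref{lemagtv}, \ref{lemanovo} and \ref{Hol}.

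The new ingredient absent from the $X^{\kappa,-d}$-valued estimate of Lemma \ref{l1} is the weight $\langle\tau\rangle^{\kappa/4}$ coming from the $W$-norm. Since $\tau=\tau_{1}+\tau_{2}$, one has
\[
\langle\tau\rangle \;\lesssim\; \langle\omega_{1}\rangle+\langle\omega_{2}\rangle+\langle\xi_{1}\rangle^{2}+\langle\xi_{2}\rangle^{2},
\]
so this extra factor can be redistributed among the modulation weights $\langle\omega_{j}\rangle^{b}$ and the frequency weights, at the cost of a loss of up to $\langle\xi_{j}\rangle^{\kappa/2}$ on the frequency side; this is precisely what forces $\kappa>\frac{1}{2}$ (with $\kappa\le 2d$) in the non-resonant regimes. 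The complementary sub-case $\kappa\le-\frac{1}{2}$ inside the window $0<a<\frac{1}{2}$ would be treated by using the full strength of $|\xi^{2}+\xi_{1}^{2}-a\xi_{2}^{2}|\gtrsim\xi_{1}^{2}$ to trade modulation decay for the growing weight $\langle\xi_{1}\rangle^{-\kappa}$, which is what generates the constraint $s-|\kappa|\le 4b$.

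I expect the main obstacle to be the bookkeeping: three regimes of $a$, three choices of maximal modulation and the sign structure of the resonance must be handled simultaneously while keeping $b,d<\frac{1}{2}$ and saturating the Sobolev constraints displayed in the statement. Achieving the sharp threshold $\kappa=\frac{1}{2}$ requires placing $d$ close to $\frac{1}{2}$ without losing the integrability provided by Lemma \ref{lemagtv}, which is presumably why the authors postpone the full proof to the appendix.
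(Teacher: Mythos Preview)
Your overall framework---duality, the resonance identity $\omega-\omega_1-\omega_2=\xi^2+\xi_1^2-a\xi_2^2$, and a case split according to the dominant modulation---matches the paper, and the reduction to bounding scalar kernels via Lemmas~\ref{lemagtv}--\ref{Hol} is exactly what the appendix does. The point of divergence is how you treat the extra weight $\langle\tau\rangle^{\kappa/2}$ coming from the $W$-norm (note: from the paper's definition and from the exponent $2d-\kappa$ appearing in the appendix, the amplitude weight is $\langle\tau\rangle^{\kappa/2}$, not $\langle\tau\rangle^{\kappa/4}$).

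You propose to control $\langle\tau\rangle$ by the \emph{input} data via $\langle\tau\rangle\lesssim\langle\omega_1\rangle+\langle\omega_2\rangle+\langle\xi_1\rangle^2+\langle\xi_2\rangle^2$ and redistribute. The paper does something simpler: it compares $\langle\tau\rangle$ to the \emph{output} quantities $|\xi|^2$ and $\langle\omega\rangle$. Concretely, for $\kappa\geq 0$ one splits into (i) $|\tau|\leq 10|\xi|^2$, where $\langle\tau\rangle^{\kappa/2}\lesssim\langle\xi\rangle^{\kappa}$ so the $W^{\kappa,-d}$ norm is dominated by the $X^{\kappa,-d}$ norm and Lemma~\ref{l1} applies verbatim; and (ii) $|\tau|>10|\xi|^2$, where $\langle\tau\rangle\sim\langle\tau+\xi^2\rangle=\langle\omega\rangle$, so $\langle\tau\rangle^{\kappa/2}\langle\omega\rangle^{-d}\sim\langle\omega\rangle^{\kappa/2-d}$ is bounded precisely when $\kappa\leq 2d$. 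This is where the constraint $\kappa\leq 2d$ actually originates; your redistribution onto the input side does not produce it, and your sentence ``this is precisely what forces $\kappa>\tfrac12$ (with $\kappa\leq 2d$)'' conflates the range where the lemma is \emph{needed} (outside $(-\tfrac12,\tfrac12]$, by Lemma~2.2) with a constraint the proof \emph{imposes}. For $\kappa\leq-\tfrac12$ the paper again keys on the output: region (ii) is harmless since $\kappa/2-d<0$, and in the complementary region $|\xi|^2\geq 10|\tau|$ one has $\langle\omega\rangle\sim\langle\xi\rangle^2$, after which the three $J$-integrals from Lemma~\ref{l1} reappear with the factor $\langle\tau\rangle^{\kappa}$ inserted; the bound on $J_2$ is what forces $s-|\kappa|\leq 4b$.

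In short, your plan is workable in spirit but takes a longer road; the paper's ``compare $|\tau|$ to $|\xi|^2$'' trick recycles Lemma~\ref{l1} wholesale for half the range and makes the origin of $\kappa\leq 2d$ transparent.
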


The following lemma treats the second nonlinearity, its proof follow the same ideas of the previous lemma and will be omitted here.

\begin{lemma}\label{lw2}
	There exist $b=b(s,\kappa)<\frac12$ and $d=d(s,\kappa)<\frac12$ such that holds the following inequality
	\begin{equation}\label{desigualdade2w}
	\left\|u\tilde u  \right\|_{W_a^{\kappa,-d}}\leq c \left\|u \right\|_{X^{\kappa,b}}\cdot \left\|\tilde u \right\|_{X^{s,b}}, 
	\end{equation}
	in the following cases:
	 \begin{itemize}
		\item $0<a<\frac12$ $(\sigma >2)$:
		\begin{equation}
		 \begin{split}
		 &-\frac12<s< \min\left\lbrace \kappa +1,\  \ 2\kappa +1\right\rbrace;
		 \end{split}
		 \end{equation}
		\item $ a>\frac12$ $(0<\sigma<2)$ and $s\leq \min\left\lbrace \kappa +1/2,\  \ 2\kappa +1/2\right\rbrace $;
		\item $a=\frac12$ $(\sigma=2)$ and $0 \leq s\leq \kappa $. 
	\end{itemize}
	
\end{lemma}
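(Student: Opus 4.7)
The plan is to parallel the proof of Lemma \ref{lw1} (carried out in the Appendix); the architecture is identical, only the underlying dispersion identity changes because the nonlinearity is now $u\tilde u$ rather than $\bar u v$. The first step is to reduce the bilinear inequality to a pointwise Fourier-multiplier estimate. Setting
$$F(\xi,\tau)=\langle\xi\rangle^{\kappa}\langle\tau+\xi^{2}\rangle^{b}\hat u(\xi,\tau),\qquad H(\xi,\tau)=\langle\xi\rangle^{s}\langle\tau+\xi^{2}\rangle^{b}\hat{\tilde u}(\xi,\tau),$$
so that $\|F\|_{L^{2}}=\|u\|_{X^{\kappa,b}}$ and $\|H\|_{L^{2}}=\|\tilde u\|_{X^{s,b}}$, Plancherel combined with a Cauchy--Schwarz in the convolution variables $(\xi_{1},\tau_{1})$ reduces the claim to showing that
$$M(\xi,\tau):=\frac{\langle\tau\rangle^{\kappa/2}}{\langle\tau-a\xi^{2}\rangle^{2d}}\iint\frac{d\xi_{1}\,d\tau_{1}}{\langle\xi_{1}\rangle^{2\kappa}\langle\xi-\xi_{1}\rangle^{2s}\langle\tau_{1}+\xi_{1}^{2}\rangle^{2b}\langle\tau-\tau_{1}+(\xi-\xi_{1})^{2}\rangle^{2b}}$$
is uniformly bounded in $(\xi,\tau)$ for suitable $b,d\in(3/8,1/2)$.

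Next, I would perform the $\tau_{1}$-integration using Lemma \ref{lemagtv}, which produces a factor $\langle\tau+\xi_{1}^{2}+(\xi-\xi_{1})^{2}\rangle^{-(4b-1)}$. Writing $\xi_{2}:=\xi-\xi_{1}$, the crucial algebraic identities controlling the remaining $\xi_{1}$-integral are the resonance relation
$$a\xi^{2}-\xi_{1}^{2}-\xi_{2}^{2}=\bigl(a-\tfrac12\bigr)\xi^{2}-\tfrac12(\xi_{1}-\xi_{2})^{2}$$
and the compatibility
$$\bigl(\tau+\xi_{1}^{2}+(\xi-\xi_{1})^{2}\bigr)-(\tau-a\xi^{2})=\bigl(a+\tfrac12\bigr)\xi^{2}+\tfrac12(\xi_{1}-\xi_{2})^{2},$$
which together describe how the three modulation weights interact. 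The sign of $(a-\tfrac12)\xi^{2}-\tfrac12(\xi_{1}-\xi_{2})^{2}$ forces the three-way case split on $a$ that appears in the statement.

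In the first non-resonant case $0<a<\tfrac12$, the quantity $(\tfrac12-a)\xi^{2}+\tfrac12(\xi_{1}-\xi_{2})^{2}$ is strictly positive, so after the shift $\xi_{1}\mapsto\xi_{1}-\xi/2$ and an application of Lemma \ref{lemanovo}, the $\xi_{1}$-integral is controlled in the wide region $-\tfrac12<s<\min\{\kappa+1,2\kappa+1\}$. In the second non-resonant case $a>\tfrac12$ the resonant curve $\{|\xi_{1}-\xi_{2}|=\sqrt{2a-1}\,|\xi|\}$ must be isolated; applying Lemma \ref{Hol} to integrate the resulting square-root singularity costs half a derivative and leads to the thresholds $s\leq\min\{\kappa+1/2,2\kappa+1/2\}$. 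In the mass-resonance case $a=\tfrac12$ the dispersion collapses to $\tfrac12(\xi_{1}-\xi_{2})^{2}$, which vanishes on the diagonal $\xi_{1}=\xi_{2}$; no modulation gain is available there and a direct estimate produces only the symmetric restriction $0\leq s\leq\kappa$. In every case the extra weight $\langle\tau\rangle^{\kappa/2}$ present in the $W_{a}$-norm (and absent from Lemma \ref{l2}) is absorbed using $\langle\tau\rangle\lesssim\langle\tau-a\xi^{2}\rangle+\langle\xi\rangle^{2}$ and distributing between the modulation factor and the spatial weights $\langle\xi_{1}\rangle^{\kappa}$, $\langle\xi-\xi_{1}\rangle^{s}$.

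The main obstacle I expect lies in the precise balance between this new $\langle\tau\rangle^{\kappa/2}$ factor and the resonance mechanism in the cases $a\geq\tfrac12$: along the resonant set the $\tau_{1}$-integration provides no gain, and one must certify that $\langle\tau-a\xi^{2}\rangle^{-2d}$ alone can absorb $\langle\tau\rangle^{\kappa/2}$ without overstraining the spatial weights $\langle\xi_1\rangle^{\kappa}\langle\xi-\xi_1\rangle^{s}$. This is exactly what forces $d$ and $b$ to be chosen strictly below $1/2$ as functions of $(s,\kappa)$, and what carves out the admissible region stated in the lemma.
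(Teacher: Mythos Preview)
Your plan is sound and matches what the paper intends: since the proof of Lemma~\ref{lw2} is explicitly omitted in the paper with the remark that it ``follows the same ideas of the previous lemma'', the only benchmark is the Appendix proof of Lemma~\ref{lw1}, and you correctly identify the change in the resonance function (from $\xi^{2}+\xi_{1}^{2}-a\xi_{2}^{2}$ to $a\xi^{2}-\xi_{1}^{2}-\xi_{2}^{2}$) as the sole structural modification.

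There is, however, a genuine organizational difference between your sketch and the paper's argument for Lemma~\ref{lw1} that is worth making explicit. The Appendix does \emph{not} bound a single multiplier $M(\xi,\tau)$ over the whole frequency--modulation plane. Instead it proceeds in two moves. First, it invokes the already--proven $X^{s,b}$ estimate (Lemma~\ref{l1}, and for your lemma it would be Lemma~\ref{l2}) as a black box to dispose of the region where $\langle\tau\rangle$ and $\langle\xi\rangle^{2}$ are comparable; this is the step ``By using Lemma~\ref{l1} it suffices to consider the case $|\tau|>10|\xi|^{2}$''. Only on the residual region does it carry out a direct bound, and there the extra factor $\langle\tau\rangle^{\kappa}$ is absorbed \emph{for free} into the output modulation (positive index case) or into $\langle\xi\rangle^{2}$ (negative index case). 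Second, for the negative--index sub-case the Appendix does not stay with a single Cauchy--Schwarz: it passes to the dualized integrals $J_{2},J_{3}$ (sup on the input variables), exactly as in the proof of Lemma~\ref{l1}, because on the set where an input modulation dominates the single multiplier $M(\xi,\tau)$ you wrote down does not see that gain. Your splitting $\langle\tau\rangle\lesssim\langle\tau-a\xi^{2}\rangle+\langle\xi\rangle^{2}$ is morally the continuous version of the first move, but it forces you to redo the work of Lemma~\ref{l2} rather than quote it; and your sketch omits the second move entirely. If you intend a fully self--contained argument via $M(\xi,\tau)$ alone, you should check carefully that integrating out $\tau_{1}$ first really recovers the gain from large input modulation in the regions $\mathcal{S}_{2},\mathcal{S}_{3}$---otherwise you will need the dualized integrals just as the paper does.
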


Initially we prove the first lemma.

\begin{proof}[Proof of the Lemma \ref{l1}]

The inequality \eqref{desigualdade1} produces the following frequencies and resonances variables
\begin{equation}\label{relacao1}
	\left\lbrace 
	\begin{array}{lll}
	\tau=\tau_1+\tau_2 & \xi=\xi_1+\xi_2&     \\
	\omega=\tau+\xi^2, & \omega_1=\tau_1-\xi_1^2,& \omega_2=\tau_2+a\xi_2^2.
	\end{array}
	\right.
\end{equation}
 We split the analysis in to cases.
\begin{itemize}
	\item Case $0<a<\frac12$:

	Following the ideas of \cite{barbosa2018}
		it is enough to show that the following integral functions 
		\begin{enumerate}
			\item[] \begin{equation}\label{j1} J_1(\xi,\tau)= \dfrac{1}{\langle\tau+\xi^2\rangle^{2d}}\displaystyle\int_{\mathbb{R}}\dfrac{\langle\xi_2\rangle^{-2s+2|{\kappa}|}\chi_{\mathcal{R}_1}}{\langle\tau-(a-1)\xi_2^2-2\xi\xi_2+\xi^2\rangle^{{4b-1}}}d\xi_2;	\end{equation}
			\item[] \begin{equation}\label{j2} J_2(\xi_2,\tau_2)= \dfrac{1}{\langle\tau_2+a\xi_2^2\rangle^{2b}}\displaystyle\int_{\mathbb{R}}\dfrac{\langle\xi_2\rangle^{-2s+2|{\kappa}|}\chi_{\mathcal{R}_2}}{\langle\tau_2+2\xi^2+\xi_2^2-2\xi\xi_2\rangle^{{2b+2d-1}}}d\xi;	\end{equation}
			\item[] 	\begin{equation}\label{j3} J_3(\xi_1,\tau_1)= \dfrac{1}{\langle\tau_1-\xi_1^2\rangle^{2b}}\displaystyle\int_{\mathbb{R}}\dfrac{\langle\xi_2\rangle^{-2s+2|{\kappa}|}\chi_{\mathcal{R}_3}}{\langle\tau_1-a\xi_2^2+\xi^2\rangle^{{2b+2d-1}}}d\xi_2	\end{equation}
		\end{enumerate}  
	are bounded.
	
		In this case, we use the \eqref{relacao1} to get 
	\begin{eqnarray*}
		|\omega-\omega_1-\omega_2|&=& |\xi^2+\xi_1^2-a\xi_2^2|\\
		&\geq & |1-a|(\xi^2+\xi_1^2)-2a|\xi\xi_1|, \ \ \mbox{ (since } 0<a<\frac{1}{2})\\
		&\geq & (1-a)(\xi^2+\xi_1^2)-a(\xi^2+\xi_1^2)=(1-2a)(\xi^2+\xi_1^2).
	\end{eqnarray*}

		It follows that, $$3\max \{|\omega|,|\omega_1|, |\omega_2|\}\geq (1-2a)\max\{\xi^2,\xi_1^2\}\geq \frac{1-2a}{4} \xi_2^2.$$
		Now, suppose that $|\xi_2|\geq 1$, {then} we have
		$$\dfrac{1}{\max \{|\omega|,|\omega_1|, |\omega_2|\}}\leq  \dfrac{c}{|\xi_2|^2}.$$
	We define the following regions 
		\begin{equation}
			\mathcal{R}_1= \bigg\lbrace |\xi_2|\geq 1, |\omega|=\max \{|\omega|,|\omega_1|, |\omega_2|\}\bigg\rbrace\cup\bigg\{|\xi_2| \leq 1\bigg\}\subset \mathbb{R}^4_{\xi,\tau,\xi_2,\tau_2};
		\end{equation}
		
		\begin{equation}
			\mathcal{R}_2= \bigg\{ |\xi_2|\geq 1, |\omega_1|=\max \{|\omega|,|\omega_1|, |\omega_2|\}\bigg\}\subset \mathbb{R}^4_{\xi,\tau,\xi_2,\tau_2}
		\end{equation}
		and
		\begin{equation}
			\mathcal{R}_3= \bigg\{ |\xi_2|\geq 1, |\tau_2+a\xi^2|=\max \{|\omega|,|\omega_1|, |\omega_2|\}\bigg\}\subset \mathbb{R}^4_{\xi,\tau,\xi_2,\tau_2}.
		\end{equation}
		Let us prove that $J_1$ is bounded.  If $|\xi_2|\leq 1$, then to control $J_1$ is equivalent to get $$\dfrac{1}{\langle\omega\rangle^{2d}}\displaystyle\int_{|\xi_2|\leq 1}\dfrac{1}{\langle\tau-(a-1)\xi_2^2-2\xi\xi_2+\xi^2\rangle^{4b-1}}d\xi_2\leq c.$$ If $|\xi_2|\geq 1$, then $J_1$ is bounded by  $$\displaystyle\int_{|\xi_2|\geq 1}\dfrac{\langle\xi_2\rangle^{-2s+2|{\kappa}|-4d}\chi_{\mathcal{R}_1}}{\langle\tau-(a-1)\xi_2^2-2\xi\xi_2+\xi^2\rangle^{{4b-1}}}d\xi_2.$$ Note that $J_1$ is bounded when $|{\kappa}|-s\leq 2d<1$ since $b>3/8$.
		
		To prove that $J_2$ is bounded, it is suffices to note that the integral below is higher than $J_2$ and that converges since $|{\kappa}|-s\leq 2b$ and  $2b+2d-1>1/2 $, that is, $b<1/2$.
		$$ \displaystyle\int_{\mathbb{R}}\dfrac{\langle\xi_2\rangle^{-2s+2|{\kappa}|-4b}\chi_{\mathcal{R}_2}}{\langle\tau_2+2\xi^2+\xi_2^2-2\xi\xi_2\rangle^{{2b+2d-1}}}d\xi.$$
		
		Analogously, in a similar way, we can prove that $J_3$ is bounded, by using that $|{\kappa}|-s\leq 2b$ and $b<1/2$.
		
		\item Case $a>\frac12$: 
		We start by considering the dispersion relation 
		\begin{eqnarray*}
			|\omega-\omega_1-\omega_2|&=& |\xi^2+\xi_1^2-a\xi_2^2|\\
			&\geq &  2|\xi-\mu_a \xi_2|\cdot |\xi-(1-\mu_a)\xi_2|, \mbox{ where }\ \mu_a=\frac{1-\sqrt{2a-1}}{2}.
		\end{eqnarray*}

	Now, define the following sets
		\begin{eqnarray*}
			\mathcal{A}_1&=& \{ |\xi_2| \leq 1\}\subset \mathbb{R}^4,  \\
			\mathcal{A}_2&=& \left\{ |\xi_2|\geq 1, \left|(1-a)\xi_2-\xi\right|>\frac{2a-1}{4}|\xi_2|\right\}\subset \mathbb{R}^4\ \text{and}  \\
			\mathcal{A}_3&=& \left\{ |\xi_2|\geq 1, \left|\xi-\frac{1}{2}\xi_2\right|>\frac{2a-1}{4}|\xi_2|\right\}\subset \mathbb{R}^4. 
		\end{eqnarray*}

	We split ${\mathcal A}_3$ in three subsets given by
		\begin{eqnarray*}
			\mathcal{A}_{3,1}&=& \mathcal{A}_3 \cap \left\{ |\omega|\geq \max \{|\omega_1|, |\omega_2|\} \right\}, \\
			\mathcal{A}_{3,2}&=& \mathcal{A}_3 \cap \left\{ |\omega_2|\geq \max \{|\omega_1|, |\omega|\}\right\}\ \text{and}\\
			\mathcal{A}_{3,3}&=& \mathcal{A}_3\cap \left\{ |\omega_1|\geq \max \{|\omega|, |\omega_2|\}\right\}.
		\end{eqnarray*}
			
		Now, we define the regions $\mathcal{R}_i$.
		 
		Let $\mathcal{R}_1=\mathcal{A}_1\cup\mathcal{A}_2\cup \mathcal{A}_{3,1}$, $\mathcal{R}_2=\mathcal{A}_{3,2}$ and $\mathcal{R}_3=\mathcal{A}_{3,3}.$
		
		We will show that $J_1$ is bounded. If $|\xi_2|\leq 1$ then $J_1$ is equivalent to $$\dfrac{1}{\langle\omega\rangle^{2d}}\displaystyle\int_{|\xi_2|\leq 1}\dfrac{1}{\langle\tau-(a-1)\xi_2^2-2\xi\xi_2+\xi^2\rangle^{4b-1}}d\xi_2\leq c.$$ 
		
		If $|\xi_2|\geq 1$, in $\mathcal{A}_2$ we have  
		$$J_1 \leq \dfrac{1}{\langle\omega\rangle^{2d}}\displaystyle\int_{|\xi_2|\geq 1}\dfrac{\langle\xi_2\rangle^{-2s+2|{\kappa}|}\chi_{\mathcal{A}_2}}{\langle\tau-(a-1)\xi_2^2-2\xi\xi_2+\xi^2\rangle^{4b-1}}d\xi_2.$$ 
		Changing the variable $\eta=\tau-(a-1)\xi_2^2-2\xi\xi_2+\xi^2$,   we get $$d\eta=-2\left((1-a)\xi_2-\xi\right)d\xi_2,$$ and, using the fact $|{\kappa}|-s< 1/2$, we obtain the following 
		\begin{eqnarray*}
			\dfrac{1}{\langle\omega\rangle^{2d}}\displaystyle\int_{|\xi_2|\geq 1}\dfrac{\langle\xi_2\rangle^{-2s+2|{\kappa}|}\chi_{\mathcal{A}_2}}{\langle\tau-(a-1)\xi_2^2-2\xi\xi_2+\xi^2\rangle^{2b}}d\xi_2 &\leq & c\dfrac{1}{\langle\omega\rangle^{2d}}\displaystyle\int_{\langle\eta \rangle\leq \langle\omega \rangle}\dfrac{\langle\xi_2\rangle^{-2s+2|{\kappa}|-1}}{\langle\eta \rangle^{4b-1}}d\eta\\
			&\leq & c\dfrac{\langle\omega \rangle^{2-4b}}{\langle\omega \rangle^{2d}}\leq c \langle\omega \rangle^{2-4b-2d}\leq c,
		\end{eqnarray*}
	since $b,d>3/8$.

		Now, note that in the region $\mathcal{A}_{3,1}$ we have
		$$
		\left|(1-a)\xi_2-\xi\right|=\left|\frac{1}{2}\xi_2-\xi +\left(\frac{1}{2}-a\right)\xi_2\right|\geq \left(a-\frac{1}{2}\right)\left|\xi_2\right|-\frac{2a-1}{4}\left|\xi_2\right|\geq c |\xi_2|.
		$$
		
		Then this case  is similar to the case $\mathcal{A}_{2}$. This concludes the proof that $J_1$ is limited.

		To prove that $J_2$ is bounded we just observe that 
		\begin{eqnarray*}
			\dfrac{1}{\langle\tau+a\xi^2\rangle^{2b}}\displaystyle\int_{\mathbb{R}}\dfrac{\langle\xi_2\rangle^{-2s+2|{\kappa}|}\chi_{\mathcal{R}_2}}{\langle\tau_2+2\xi^2+\xi_2^2-2\xi\xi_2\rangle^{2b+2d-1}}d\xi &= &  \dfrac{1}{\langle\omega_2 \rangle^{2b}}\displaystyle\int_{\mathbb{R}}\dfrac{\langle\xi_2\rangle^{-2s+2|{\kappa}|}\chi_{\mathcal{A}_{3,2}}}{\langle\tau_2+2\xi^2+\xi_2^2-2\xi\xi_2\rangle^{2b+2d-1}}d\xi\\
			&\leq & \dfrac{c}{\langle\omega_2 \rangle^{2b}}\displaystyle\int_{\langle\eta\rangle\leq 4 \langle\omega_2\rangle}\dfrac{\langle\xi_2\rangle^{-2s+2|{\kappa}|-1}}{\langle\eta\rangle^{2b+2d-1}}d\eta\\
			&\leq & c {\langle\omega_2 \rangle^{2-4b-2d}}\leq c.
		\end{eqnarray*}
		Where in the first inequality above, we made the change of variable $\eta=\tau_2+2\xi^2+\xi_2^2-2\xi\xi_2$ and used the fact that 
		$$
		|\eta|=|\omega_2+ (\omega-\omega_1-\omega_2)|\leq 4 |\omega_2|.
		$$
		Finally, we estimate $J_3$. 
		Analogously to the last estimate we get 
		\begin{eqnarray*}
			\dfrac{1}{\langle\tau-\xi_1^2\rangle^{2b}}\displaystyle\int_{\mathbb{R}}\dfrac{\langle\xi_2\rangle^{-2s+2|{\kappa}|}\chi_{\mathcal{R}_3}}{\langle\tau_1-a\xi_2^2+\xi^2\rangle^{2b+2d-1}}d\xi_2 &= &\dfrac{1}{\langle\omega_1\rangle^{2b}}\displaystyle\int_{|\xi_2|>1}\dfrac{\langle\xi_2\rangle^{-2s+2|{\kappa}|}\chi_{\mathcal{A}_{3,3}}}{\langle\tau_1-a\xi_2^2+\xi^2\rangle^{2b+2d-1}}d\xi_2 \\
			&\leq & \dfrac{c}{\langle\omega_1\rangle^{2b}}\displaystyle\int_{\langle\eta\rangle\leq 4 \langle\omega_1\rangle}\dfrac{\langle\xi_2\rangle^{-2s+2|{\kappa}|-1}}{\langle\eta\rangle^{2b+2d-1}}d\eta\\
			&\leq & c {\langle\omega_1 \rangle^{2-4b-2d}}\leq c.
		\end{eqnarray*}
	Where we have used the fact 
		$\tau_1-a\xi_2^2+\xi^2=\omega_1+(\omega -\omega_1-\omega_2)$.
		
		This finishes the proof of the case $a>\frac12$.

	\medskip
	
	  \item Case $a=\frac12$ and $|\kappa|\leq s$:
	  
	  	In this case, we do not have a nice dispersion relation. Then, we consider 
	  	$\mathcal{R}_1=\mathbb{R}^4$ and $\mathcal{R}_2=\mathcal{R}_3=\varnothing $. Thus, we only need to prove that $J_1$ is bounded.
	  	If $|\kappa|\leq s$ then  $$\dfrac{1}{\langle\omega\rangle^{2d}}\displaystyle\int_{|\xi_2|\geq 1}\dfrac{1}{\langle\tau-\frac12\xi_2^2-2\xi\xi_2+\xi^2\rangle^{4b-1}}d\xi_2\leq c,$$ 
	  	since $b>3/8$ and $d>0$. This finishes the proof of the Lemma \ref{l1}.
\end{itemize}

\end{proof}

\begin{proof}[Proof of the Lemma \ref{l2}]
The inequality \ref{desigualdade2}  produces the following dispersion relation
	\begin{equation}\label{relacao2}
	\left\lbrace 
	\begin{array}{lll}
	\tau=\tau_1+\tau_2 & \xi=\xi_1+\xi_2&     \\
	\lambda=\tau+a\xi^2, & \lambda_1=\tau_1+\xi_1^2,& \lambda_2=\tau_2+\xi_2^2
	\end{array}
	\right.
	\end{equation}
	
 \begin{itemize}
	\item $0<a<\frac12$ $(\sigma >2)$ and $s<\min\left\lbrace \kappa +1,\  \ 2\kappa +1\right\rbrace$.
	
		Analogously to the previous lemma, the estimate (\ref{desigualdade2}) is equivalent
		to prove that the following integral functions are  are bounded
			\begin{enumerate}
			\item[] \begin{equation}\label{j4} J_4(\xi,\tau)= \dfrac{1}{\langle\lambda\rangle^{2d}}\displaystyle\int_{\mathbb{R}}\dfrac{\langle \xi\rangle^{2s}\langle \xi_1\rangle^{-2{\kappa}} \langle \xi_2\rangle^{-2{\kappa}}\chi_{\mathcal{S}_1}}{\langle\tau+2\xi_2^2-2\xi\xi_2+\xi^2\rangle^{{4b-1}}}d\xi_2;\end{equation}
			\item[]\begin{equation}\label{j5} J_5(\xi_2,\tau_2)= \dfrac{1}{\langle\lambda_2\rangle^{2b}}\displaystyle\int_{\mathbb{R}}\dfrac{\langle \xi\rangle^{2s}\langle \xi_1\rangle^{-2{\kappa}} \langle \xi_2\rangle^{-2{\kappa}}\chi_{\mathcal{S}_2}}{\langle\tau_2+(a-1)\xi^2-\xi_2^2+2\xi\xi_2\rangle^{{2b+2d-1}}}d\xi;\end{equation}
			\item[] \begin{equation}\label{j6} J_6(\xi,\tau)= \dfrac{1}{\langle\lambda_1\rangle^{2b}}\displaystyle\int_{\mathbb{R}}\dfrac{\langle \xi\rangle^{2s}\langle \xi_1\rangle^{-2{\kappa}} \langle \xi_2\rangle^{-2{\kappa}}\chi_{\mathcal{S}_3}}{\langle\tau_1+a\xi^2+\xi_2^2\rangle^{{2b+2d-1}}}d\xi_2,\end{equation}
		\end{enumerate}
	 where  $\mathcal{S}_1\cup\mathcal{S}_2\cup\mathcal{S}_3=\mathbb{R}^4$ with $\mathcal{S}_j$ measurable.
		
		Note that 
		\begin{eqnarray*}
			|\lambda-\lambda_1-\lambda_2|&=& |a\xi^2-\xi_1^2-\xi_2^2|\\
			&\geq & |1-a|(\xi_1^2+\xi_2^2)-2a|\xi_1\xi_2|, \ \ \mbox{ (since } 0<a<\frac{1}{2})\\
			&\geq & (1-a)(\xi_1^2+\xi_2^2)-a(\xi_1^2+\xi_2^2)=(1-2a)(\xi_1^2+\xi_2^2),
		\end{eqnarray*}
		where we have used that $\xi=\xi_1+\xi_2$ and $|\xi|\leq |\xi_1|+|\xi_2|\leq 2\max \{\xi_1,\xi_2\}.$ 
		Then, $$3\max \{|\lambda|,|\lambda_1|, |\lambda_2|\}\geq (1-2a)\max\{\xi_1^2,\xi_2^2\}\geq \frac{1-2a}{4} \xi^2.$$
		Therefore, supposing that $|\xi|\geq 1$, we have
		$$\dfrac{1}{\max \{|\lambda|,|\lambda_1|, |\lambda_2|\}}\leq  \dfrac{c}{|\xi|^2}.$$
		Now, we define the regions $\mathcal{S}_i$. 
		\begin{equation}
			\mathcal{S}_1= \bigg\{ |\xi|\geq 1, |\lambda|=\max \{|\lambda|,|\lambda_1|, |\lambda_2|\}\bigg\}\cup\bigg\{|\xi| \leq 1\bigg\}\subset \mathbb{R}^4_{\xi,\tau,\xi_2,\tau_2},
		\end{equation}
		\begin{equation}
			\mathcal{S}_2= \bigg\{ |\xi|\geq 1, |\lambda_2|=\max \{|\lambda|,|\lambda_1|, |\lambda_2|\}\bigg\}\subset \mathbb{R}^4_{\xi,\tau,\xi_2,\tau_2}\ \text{and}
		\end{equation}
		\begin{equation}
			\mathcal{S}_3= \bigg\{ |\xi|\geq 1, |\lambda_1|=\max \{|\lambda|,|\lambda_1|, |\lambda_2|\}\bigg\}\subset \mathbb{R}^4_{\xi,\tau,\xi_2,\tau_2}.
		\end{equation}
		
		For $\kappa\geq 0$, we have 
		$\langle \xi_1 \rangle^{-2\kappa} \langle \xi_2 \rangle^{-2\kappa}\leq \langle \xi \rangle^{-2\kappa}$ and in this case 
		\begin{equation}
			J_4 \ \leq \displaystyle\int_{\mathbb{R}}\dfrac{\langle \xi\rangle^{2s-2{\kappa}+4d}\chi_{\mathcal{S}_1}}{\langle\tau+\xi_2^2-2\xi\xi_2+\xi^2\rangle^{{4b-1}}}d\xi_2.
		\end{equation}  
		Therefore, $J_4$ is bounded since $s-{\kappa}+2d<0$ for $s-{\kappa}<2d$.\\
		Note that $J_5$ and $J_6$ satisfies,
		\begin{equation}
			J_5 \ \leq \displaystyle\int_{\mathbb{R}}\dfrac{\langle \xi\rangle^{2s-2{\kappa}-4b}\chi_{\mathcal{S}_2}}{\langle\tau_2+(a-1)\xi^2-\xi_2^2+2\xi\xi_2\rangle^{{2b+2d-1}}}d\xi
		\end{equation}  
		and
		\begin{equation}
			J_6 \ \leq \displaystyle\int_{\mathbb{R}}\dfrac{\langle \xi\rangle^{2s-2{\kappa}-4b}\chi_{\mathcal{S}_3}}{\langle\tau_1+a\xi^2+\xi_2^2\rangle^{{2b+2d-1}}}d\xi_2,
		\end{equation}
		and that they are bounded since $s-{\kappa}<2b$ and $2b+2d-1> \frac{1}{2}$, that is, $\frac{3}{8}  <b,d<\frac{1}{2}.$

		The case $\kappa<0$ we need to separate in sub-cases as follows: 
		\begin{enumerate}
			\item Considering $|\xi_1|\leq \frac{2}{3}|\xi_2|$: we have $\langle\xi_1\rangle^{-2\kappa}\langle\xi_2\rangle^{-2\kappa}\leq \langle \xi_2\rangle^{-4\kappa}$. Moreover, $|\xi_2|\leq |\xi_1|+|\xi|\leq \frac{2|\xi_2|}{3}+|\xi|$, hence $|\xi_2|\leq 3|\xi|$. Therefore, $$\langle \xi\rangle^{2s}\langle \xi_1\rangle^{-2\kappa} \langle \xi_2\rangle^{-2\kappa}\leq \langle \xi\rangle^{2s-4\kappa}.$$
			
			\item Supposing  $|\xi_2|\leq \frac{2}{3}|\xi_1|$, we have,  $$\langle \xi\rangle^{2s}\langle \xi_1\rangle^{-2\kappa} \langle \xi_2\rangle^{-2\kappa}\leq \langle \xi\rangle^{2s-4\kappa}.$$
			
			\item The last case, $\frac{2}{3}|\xi_2|<|\xi_1|<\frac{3}{2}|\xi_2|$.
			\begin{enumerate}
				\item If $\xi_1,\ \xi_2\geq 0$ then $ \frac{2}{3}\xi_2<\xi_1<\frac{3}{2}\xi_2\Longrightarrow \frac{5}{3}\xi_2<\xi <\frac{5}{2}\xi_2$. Hence, $$\langle \xi\rangle^{2s}\langle \xi_1\rangle^{-2{\kappa} } \langle \xi_2\rangle^{-2{\kappa} }\leq \langle \xi\rangle^{2s-4\kappa}.$$
				\item If $\xi_1,\ \xi_2\leq 0$ then $ \frac{-2}{3}\xi_2<-\xi_1<\frac{-3}{2}\xi_2\Longrightarrow \frac{-5}{3}\xi_2<-\xi <\frac{-5}{2}\xi_2$, thus $|\xi_2|<\frac{3}{5}|\xi|$. Hence, $$\langle \xi\rangle^{2s}\langle \xi_1\rangle^{-2{\kappa} } \langle \xi_2\rangle^{-2{\kappa} }\leq \langle \xi\rangle^{2s-4\kappa}.$$
				\item If $\xi_1>0$ and $\xi_2<0$ then $\frac{-2}{3}\xi_2<\xi_1<\frac{-3}{2}\xi_2\Longrightarrow \frac{1}{3}\xi_2<\xi <\frac{-1}{2}\xi_2\Longrightarrow |\xi|<\frac{1}{2}|\xi_2|$.
				\item If $\xi_1<0$ and $\xi_2>0$ then $\frac{2}{3}\xi_2<-\xi_1<\frac{3}{2}\xi_2\Longrightarrow \frac{-1}{3}\xi_2<-\xi <\frac{1}{2}\xi_2$, consequently $|\xi|<\frac{1}{2}|\xi_2|$.
			\end{enumerate}
			
		\end{enumerate}

		The cases (1), (2), (3.a) and (3.b) are true  for $\kappa<0 $ and $s<2{\kappa}+1$.
		
		Indeed, given $\mathcal{A}\subset \mathbb{R}^4$ the set of the elements of $\mathbb{R}^4$ that satisfies one of the conditions (1), (2), (3.a) or (3.b), given $\mathcal{B}=\mathbb{R}^4 \setminus \mathcal{A}$.
		Now consider 
		$\mathcal{A}_{i}=\mathcal{S}_{i}\cap \mathcal{A}$ and $\mathcal{B}_{i}=\mathcal{S}_{i}\cap \mathcal{B}.$

		Analyzing the restrictions $\mathcal{A}_{i}$, we get:
		\begin{eqnarray*}
			J_4 &&= \dfrac{1}{\langle\lambda\rangle^{2d}}\displaystyle\int_{\mathbb{R}}\dfrac{\langle \xi\rangle^{2s}\langle \xi_1\rangle^{-2{\kappa} } \langle \xi_2\rangle^{ -2{\kappa} }\chi_{\mathcal{A}_{1}}}{\langle\tau+2\xi_2^2-2\xi\xi_2+\xi^2\rangle^{{4b-1}}}d\xi_2\\
			&&\leq \displaystyle\int_{\mathbb{R}}\dfrac{\langle \xi\rangle^{2s-4\kappa-4d} \chi_{\mathcal{A}_{1}}}{\langle\tau+2\xi_2^2-2\xi\xi_2+\xi^2\rangle^{{4b-1}}}d\xi_2.
		\end{eqnarray*}
		Then $J_4$ is bounded for $s\leq 2{\kappa}+2d$ and $b>3/8$.
		\begin{eqnarray*}
			J_5 &&= \dfrac{1}{\langle\lambda_2\rangle^{2b}}\displaystyle\int_{\mathbb{R}}\dfrac{\langle \xi\rangle^{2s}\langle \xi_1\rangle^{-2{\kappa} } \langle \xi_2\rangle^{-2{\kappa} }\chi_{\mathcal{A}_{2}}}{\langle\tau_2+(a-1)\xi^2-\xi_2^2+2\xi\xi_2\rangle^{{2b+2d-1}}}d\xi\\
			&&\leq \displaystyle\int_{\mathbb{R}}\dfrac{\langle \xi\rangle^{2s-4\kappa-4b}\chi_{\mathcal{A}_{2}}}{\langle\tau_2+(a-1)\xi^2-\xi_2^2+2\xi\xi_2\rangle^{{2b+2d-1}}}d\xi \ \ 
		\end{eqnarray*}
		and $J_5$ is bounded for $s\leq 2{\kappa}+2b$ and $1/2>b$.
		\begin{eqnarray*}
			J_6 &&= \dfrac{1}{\langle\lambda_1\rangle^{2b}}\displaystyle\int_{\mathbb{R}}\dfrac{\langle \xi\rangle^{2s}\langle \xi_1\rangle^{-2{\kappa} } \langle \xi_2\rangle^{-2{\kappa} }\chi_{\mathcal{A}_{3}}}{\langle\tau_1+a\xi^2+\xi_2^2\rangle^{{2b+2d-1}}}d\xi_2\\
			&&\leq \displaystyle\int_{\mathbb{R}}\dfrac{\langle \xi\rangle^{2s-4\kappa-4b}\chi_{\mathcal{A}_{3}}}{\langle\tau_1+a\xi^2+\xi_2^2\rangle^{{2b+2d-1}}}d\xi_2.
		\end{eqnarray*}
		Then $J_6$ is also bounded for $s\leq 2{\kappa}+2b$.

		To analyze the remaining cases (which is equivalent to supposing $|\xi|<\frac{1}{2}|\xi_2|$ and $|\xi_1|\cong |\xi_2|$) let us consider
		them as regions $\mathcal{B}_{i}$: 
		
		We start by estimating $J_4$.
		\begin{eqnarray*}
			J_4 &&= \dfrac{1}{\langle\lambda\rangle^{2d}}\displaystyle\int_{\mathbb{R}}\dfrac{\langle \xi\rangle^{2s}\langle \xi_1\rangle^{-2{\kappa} } \langle \xi_2\rangle^{-2{\kappa} }\chi_{\mathcal{B}_{1}}}{\langle\tau+2\xi_2^2-2\xi\xi_2+\xi^2\rangle^{4b-1}}d\xi_2\\
			&&\leq \displaystyle\int_{\mathbb{R}}\dfrac{\langle \xi\rangle^{2s-4d}\langle \xi_1\rangle^{-4\kappa} \chi_{\mathcal{B}_{1}}}{\langle\tau+2\xi_2^2-2\xi\xi_2+\xi^2\rangle^{4b-1}}d\xi_2\\
			&& \leq \displaystyle\int_{\mathbb{R}}\dfrac{\langle \xi\rangle^{2s-4d}\langle \xi_1\rangle^{-4\kappa} \chi_{\mathcal{B}_{1}}}{\langle\eta \rangle^{4b-1/2}}d\eta
		\end{eqnarray*}
		We use above, $|2\xi_2-\xi|\cong \langle\eta \rangle^{1/2}$.
		
		Hence,
		$J_4\leq \langle \xi\rangle^{2s-4d}\langle \xi_1\rangle^{-4\kappa}\displaystyle\int_{\mathbb{R}}\dfrac{d\eta}{\langle\eta \rangle^{4b-1/2}}$, that is bounded because $4b-1/2>1$ and $2s\leq 4\kappa+4d$.
		$$\langle \xi\rangle^{2s-4d}\langle \xi_1\rangle^{-4\kappa}\leq \langle \xi \rangle^{2s-4\kappa-4d}\leq c.$$
		We continue to estimate $J_5$:
		\begin{eqnarray*}
			J_5 &&= \dfrac{1}{\langle\lambda_2\rangle^{2b}}\displaystyle\int_{\mathbb{R}}\dfrac{\langle \xi\rangle^{2s}\langle \xi_1\rangle^{-2{\kappa} } \langle \xi_2\rangle^{-2{\kappa} }\chi_{\mathcal{B}_{2}}}{\langle\tau_2+(a-1)\xi^2-\xi_2^2+2\xi\xi_2\rangle^{2b+2d-1}}d\xi\\
			&&\leq  \dfrac{1}{\langle\lambda_2\rangle^{2b}}\displaystyle\int_{\mathbb{R}}\dfrac{\langle \xi\rangle^{2s}\langle \xi_2\rangle^{-4\kappa}\chi_{\mathcal{B}_{2}}}{\langle\tau_2+(a-1)\xi^2-\xi_2^2+2\xi\xi_2\rangle^{2b+2d-1}}d\xi
		\end{eqnarray*}
		Setting $\eta=\tau_2+(a-1)\xi^2-\xi_2^2+2\xi\xi_2$, such that $d\eta=2(\xi_2+(a-1)\xi)d\xi$. Now, as $0<a<\frac{1}{2},$ it follows $|a-1|< 1$ and therefore $|\xi_2+(a-1)\xi|\geq \frac{1}{2}|\xi_2|\cong \langle \eta\rangle^{1/2}$. 
		Thus,
		\begin{eqnarray*}
			J_5 && \leq  \dfrac{1}{\langle\lambda_2\rangle^{2b}}\displaystyle\int_{\mathbb{R}}\dfrac{\langle \xi\rangle^{2s}\langle \xi_2\rangle^{-4\kappa}}{\langle \eta\rangle^{2b+2d-1/2}}d\eta \\
			&&\leq \dfrac{1}{\langle\lambda_2\rangle^{2b}}\displaystyle\int_{\mathbb{R}}\dfrac{\langle \xi_2\rangle^{\max\{0,2s\}-4\kappa}}{\langle\eta \rangle^{2b+2d-1/2}}d\eta \leq \langle \xi_2\rangle^{\max\{0,2s\}-4\kappa-4d}.			
		\end{eqnarray*}

		The estimate of $J_6$ is analogous of the estimate of $J_5$.

	\item $ a>\frac12$ $(0<\sigma<2)$ and $s< \min\left\lbrace \kappa +1/2,\  \ 2\kappa +1/2\right\rbrace $.
	
		The dispersion relation above is zero in two straight lines.
		
		Now, we define, 
		\begin{eqnarray*}
			\mathcal{B}_1&=& \{ |\xi| \leq 1\}\subset \mathbb{R}^4,  \\
			\mathcal{B}_2&=& \left\{ |\xi|\geq 1, \left|\xi_2-\frac{1}{2}\xi\right|>\frac{2a-1}{4}|\xi|\right\}\subset \mathbb{R}^4\ \text{and} \\
			\mathcal{B}_3&=& \left\{ |\xi|\geq 1, \left|(1-a)\xi-\xi_2\right|>\frac{2a-1}{4}|\xi|\right\}\subset \mathbb{R}^4.  
		\end{eqnarray*}
	
It follows that $\mathbb{R}^4= \mathcal{B}_1\cup \mathcal{B}_2\cup \mathcal{B}_3.$
		
		Now, we set the following sets
		\begin{eqnarray*}
			\mathcal{B}_{3,1}&=& \mathcal{B}_3 \cap \left\{ |\lambda|\geq \max \{|\lambda_1|, |\lambda_2|\} \right\}, \\
			\mathcal{B}_{3,2}&=& \mathcal{B}_3 \cap \left\{ |\lambda_2|\geq \max \{|\lambda_1|, |\lambda|\}\right\}\ \text{and}\\
			\mathcal{B}_{3,3}&=& \mathcal{B}_3\cap \left\{ |\lambda_1|\geq \max \{|\lambda|, |\lambda_2|\}\right\}.
		\end{eqnarray*}
		We define the regions $\mathcal{S}_i$ (analogously to the proof of case $a<\frac12$), 
		setting $\mathcal{S}_1=\mathcal{B}_1\cup\mathcal{B}_2\cup \mathcal{B}_{3,1}$, $\mathcal{S}_2=\mathcal{B}_{3,2}$ and $\mathcal{S}_3=\mathcal{B}_{3,3}.$
		
		For $\kappa\geq 0$, we have
		$\langle \xi_1 \rangle^{-2{\kappa} } \langle \xi_2 \rangle^{-2{\kappa} }\leq \langle \xi \rangle^{-2{\kappa} }$:
		
		\begin{equation}\label{j41}
			J_4 \leq \dfrac{1}{\langle\lambda\rangle^{2d}}\displaystyle\int_{\mathbb{R}}\dfrac{\langle \xi\rangle^{2s-2{\kappa} } \chi_{\mathcal{S}_1}}{\langle\tau+2\xi_2^2-2\xi\xi_2+\xi^2\rangle^{4b-1}}d\xi_2,
		\end{equation}
		
		\begin{equation}\label{j51}
			J_5 \leq \dfrac{1}{\langle\lambda_2\rangle^{2b}}\displaystyle\int_{\mathbb{R}}\dfrac{\langle \xi\rangle^{2s-2{\kappa} }\chi_{\mathcal{S}_2}}{\langle\tau_2+(a-1)\xi^2-\xi_2^2+2\xi\xi_2\rangle^{2b+2d-1}}d\xi\ \text{and}
		\end{equation}
		
		\begin{equation}\label{j61}
			J_6 \leq \dfrac{1}{\langle\lambda_1\rangle^{2b}}\displaystyle\int_{\mathbb{R}}\dfrac{\langle \xi\rangle^{2s-2{\kappa} }\chi_{\mathcal{S}_3}}{\langle\tau_1+a\xi^2+\xi_2^2\rangle^{2b+2d-1}}d\xi_2. 
		\end{equation}
		To complete the proof that $J_4$ is bounded it is it is suffices that  (\ref{j41}) satisfies:  
		\begin{equation*}
			\dfrac{1}{\langle\lambda\rangle^{2d}}\displaystyle\int_{\mathbb{R}}\dfrac{\langle \xi\rangle^{2s-2{\kappa} } \chi_{\mathcal{B}_1}}{\langle\tau+2\xi_2^2-2\xi\xi_2+\xi^2\rangle^{4b-1}}d\xi_2\leq \dfrac{c}{\langle\lambda\rangle^{2d}}\displaystyle\int_{\mathbb{R}}\dfrac{1}{\langle\tau+2\xi_2^2-2\xi\xi_2+\xi^2\rangle^{4b-1}}d\xi_2\leq c;
		\end{equation*}
		
		\begin{equation*} 
			\dfrac{1}{\langle\lambda\rangle^{2d}}\displaystyle\int_{\mathbb{R}}\dfrac{\langle \xi\rangle^{2s-2{\kappa} } \chi_{\mathcal{B}_2}}{\langle\tau+2\xi_2^2-2\xi\xi_2+\xi^2\rangle^{4b-1}}d\xi_2\leq \dfrac{c}{\langle\lambda\rangle^{2d}}\displaystyle\int_{{\langle\eta\rangle\leq 4\langle\lambda\rangle}}\dfrac{\langle \xi\rangle^{2s-2{\kappa}-1}}{\langle\eta\rangle^{4b-1}}d\xi_2\leq \dfrac{\langle \xi\rangle^{2s-2{\kappa}-1}}{\langle\lambda\rangle^{4b+2d-2}}   \mbox{ and}
		\end{equation*}
		
		\begin{equation*}
			\dfrac{1}{\langle\lambda\rangle^{2d}}\displaystyle\int_{\mathbb{R}}\dfrac{\langle \xi\rangle^{2s-2{\kappa} } \chi_{\mathcal{B}_{3,1}}}{\langle\tau+2\xi_2^2-2\xi\xi_2+\xi^2\rangle^{4b-1}}d\xi_2\leq\dfrac{c}{\langle\lambda\rangle^{2d}}\displaystyle\int_{{\langle\eta\rangle\leq 4\langle\lambda\rangle}}\dfrac{\langle \xi\rangle^{2s-2{\kappa}-1}}{\langle\eta\rangle^{4b-1}}d\xi_2\leq c\dfrac{\langle \xi\rangle^{2s-2{\kappa}-1}}{\langle\lambda\rangle^{4b+2d-2}}.
		\end{equation*}
		In the estimates above, we used the fact $b,d>3/8$ and $s\leq \kappa+ 1/2$.

		Let us estimate (\ref{j51}), using the fact that $$\eta=\tau_2+(a-1)\xi^2-\xi_2^2+2\xi\xi_2=\lambda_2+(\lambda-\lambda_1-\lambda_2),$$ which give us $d\eta=2((1-a)\xi-\xi_2)d\xi$, so
		\begin{eqnarray*}
			\dfrac{1}{\langle\lambda_2\rangle^{2b}}\displaystyle\int_{\mathbb{R}}\dfrac{\langle \xi\rangle^{2s-2{\kappa} }\chi_{\mathcal{B}_{3,2}}}{\langle\tau_2+(a-1)\xi^2-\xi_2^2+2\xi\xi_2\rangle^{2b+2d-1}}d\xi & \leq & \dfrac{c}{\langle\lambda_2\rangle^{2b}}\displaystyle\int_{\langle\eta\rangle\leq 4\langle\lambda_2\rangle}\dfrac{\langle \xi\rangle^{2s-2{\kappa}-1}}{\langle\eta\rangle^{2b+2d-1}}d\eta \\
			& \leq & \dfrac{c}{\langle\lambda_2\rangle^{4b+2d-2}}\leq c.
		\end{eqnarray*}
		Now let us estimate (\ref{j61}). This is completely analogous to the previous estimate.
		\begin{eqnarray*}
			\dfrac{1}{\langle\lambda_1\rangle^{2b}}\displaystyle\int_{\mathbb{R}}\dfrac{\langle \xi\rangle^{2s-2{\kappa} }\chi_{\mathcal{S}_3}}{\langle\tau_1+a\xi^2+\xi_2^2\rangle^{2b+2d-1}}d\xi_2 & \leq & \dfrac{c}{\langle\lambda_1\rangle^{2b}}\displaystyle\int_{\langle\eta\rangle\leq 4\langle\lambda_1\rangle}\dfrac{\langle \xi\rangle^{2s-2{\kappa}-1}}{\langle\eta\rangle^{2b+2d-1}}d\eta \\
			& \leq & \dfrac{c}{\langle\lambda_1\rangle^{4b+2d-2}}\leq c.
		\end{eqnarray*}
		This concludes the case $\kappa \geq 0$.
		
		\vspace{0.5cm}
		The case $\kappa<0$ will be separated into sub-cases: 
		
		\begin{enumerate}
			\item Supposing $|\xi_1|\leq \frac{2}{3}|\xi_2|$, then, $\langle\xi_1\rangle^{-2{\kappa} }\langle\xi_2\rangle^{-2{\kappa} }\leq \langle \xi_2\rangle^{-4\kappa}$. Moreover,$|\xi_2|\leq |\xi_1|+|\xi|\leq \frac{2|\xi_2|}{3}+|\xi|$, hence $|\xi_2|\leq 3|\xi|$. Therefore, $$\langle \xi\rangle^{2s}\langle \xi_1\rangle^{-2{\kappa} } \langle \xi_2\rangle^{-2{\kappa} }\leq \langle \xi\rangle^{2s-4\kappa}.$$
			
			\item Supposing  $|\xi_2|\leq \frac{2}{3}|\xi_1|$ we have the same result, that is,  $$\langle \xi\rangle^{2s}\langle \xi_1\rangle^{-2{\kappa} } \langle \xi_2\rangle^{-2{\kappa} }\leq \langle \xi\rangle^{2s-4\kappa}.$$
			
			\item For the case, $\frac{2}{3}|\xi_2|<|\xi_1|<\frac{3}{2}|\xi_2|$, we need to do the following:
			\begin{enumerate}
				\item If $\xi_1,\ \xi_2\geq 0$ then $ \frac{2}{3}\xi_2<\xi_1<\frac{3}{2}\xi_2\Longrightarrow \frac{5}{3}\xi_2<\xi <\frac{5}{2}\xi_2$. Hence, $$\langle \xi\rangle^{2s}\langle \xi_1\rangle^{-2{\kappa} } \langle \xi_2\rangle^{-2{\kappa} }\leq \langle \xi\rangle^{2s-4\kappa}.$$
				\item If $\xi_1,\ \xi_2\leq 0$ then $ \frac{-2}{3}\xi_2<-\xi_1<\frac{-3}{2}\xi_2\Longrightarrow \frac{-5}{3}\xi_2<-\xi <\frac{-5}{2}\xi_2$, so $|\xi_2|<\frac{3}{5}|\xi|$. Hence, $$\langle \xi\rangle^{2s}\langle \xi_1\rangle^{-2{\kappa} } \langle \xi_2\rangle^{-2{\kappa} }\leq \langle \xi\rangle^{2s-4\kappa}.$$
				\item If $\xi_1>0$ and $\xi_2<0$ then $\frac{-2}{3}\xi_2<\xi_1<\frac{-3}{2}\xi_2\Longrightarrow \frac{1}{3}\xi_2<\xi <\frac{-1}{2}\xi_2$, now $|\xi|<\frac{1}{2}|\xi_2|$.
				\item If $\xi_1<0$ and $\xi_2>0$ then $\frac{2}{3}\xi_2<-\xi_1<\frac{3}{2}\xi_2\Longrightarrow \frac{-1}{3}\xi_2<-\xi <\frac{1}{2}\xi_2$, which give us $|\xi|<\frac{1}{2}|\xi_2|$.
			\end{enumerate}
			
		\end{enumerate}
		
		The cases (1), (2), (3.a) and (3.b) are valid for $\kappa<0 $ and $s\leq2{\kappa}+\frac{1}{2}$.
		
		Indeed, let $\mathcal{C}\subset \mathbb{R}^4$ be the set of element $\mathbb{R}^4$ that satisfies one of the conditions (1), (2), (3.a) or (3.b).
		Now consider 
		$\mathcal{C}_{i}=\mathcal{S}_{i}\cap \mathcal{C}$.

		Analyzing the restrictions on $\mathcal{C}_{i}$, we get:
		\begin{eqnarray*}
			\dfrac{1}{\langle\lambda\rangle^{2d}}\displaystyle\int_{\mathbb{R}}\dfrac{\langle \xi\rangle^{2s}\langle \xi_1\rangle^{-2{\kappa} } \langle \xi_2\rangle^{-2{\kappa} }\chi_{\mathcal{C}_{1}}}{\langle\tau+\xi_2^2-2\xi\xi_2+\xi^2\rangle^{4b-1}}d\xi_2 & \leq & \dfrac{1}{\langle\lambda\rangle^{2d}}\displaystyle\int_{\mathbb{R}}\dfrac{\langle \xi\rangle^{2s-4\kappa} \chi_{\mathcal{C}_{1}}}{\langle\tau+\xi_2^2-2\xi\xi_2+\xi^2\rangle^{4b-1}}d\xi_2\\
			& \leq &\dfrac{c}{\langle\lambda\rangle^{2d}}\displaystyle\int_{\langle\eta\rangle\leq 4\langle\lambda\rangle}\dfrac{\langle \xi\rangle^{2s-4\kappa-1} }{\langle\eta\rangle^{4b-1}}d\xi_2 \leq  c, \end{eqnarray*}
	since $3/8<b,d \mbox{ and } s\leq 2{\kappa}+1/2$.

		\begin{eqnarray*}
			\dfrac{1}{\langle\lambda_2\rangle^{2b}}\displaystyle\int_{\mathbb{R}}\dfrac{\langle \xi\rangle^{2s}\langle \xi_1\rangle^{-2{\kappa} } \langle \xi_2\rangle^{-2{\kappa} }\chi_{\mathcal{C}_{2}}}{\langle\tau_2+(a-1)\xi^2-\xi_2^2+2\xi\xi_2\rangle^{2b+2d-1}}d\xi
			&&\leq \dfrac{c}{\langle\lambda_2\rangle^{2b}}\displaystyle\int_{\langle\eta\rangle\leq 4\langle\lambda_2\rangle}\dfrac{\langle \xi\rangle^{2s-4\kappa-1}}{\langle\eta\rangle^{2b+2d-1}}d\eta \\
			&&\leq \dfrac{c}{\langle\lambda_2\rangle^{4b+2d-2}}\leq c.
		\end{eqnarray*}

		\begin{eqnarray*}
			\dfrac{1}{\langle\lambda_1\rangle^{2b}}\displaystyle\int_{\mathbb{R}}\dfrac{\langle \xi\rangle^{2s}\langle \xi_1\rangle^{-2{\kappa} } \langle \xi_2\rangle^{-2{\kappa} }\chi_{\mathcal{C}_{3}}}{\langle\tau_1+a\xi^2+\xi_2^2\rangle^{2b+2d-1}}d\xi_2
			& &\leq \dfrac{c}{\langle\lambda_1\rangle^{2b}}\displaystyle\int_{\langle\eta\rangle\leq 4\langle\lambda_1\rangle}\dfrac{\langle \xi\rangle^{2s-4\kappa-1}}{\langle\eta\rangle^{2b+2d-1}}d\eta \\
			&&\leq \dfrac{c}{\langle\lambda_1\rangle^{4b+2d-2}}\leq c.
		\end{eqnarray*}
		
		Consider $\mathcal{D}=\mathbb{R}^4 \setminus \mathcal{C}$  and $\mathcal{D}_{i}=\mathcal{S}_{i}\cap \mathcal{D}.$
		To obtain the other cases (which is equivalent to supposing $|\xi|<\frac{1}{2}|\xi_2|$ and $|\xi_1|\sim |\xi_2|$) let us consider the regions $\mathcal{D}_{i}$: 
		
		We begin by estimating $J_4$.
		
		\begin{eqnarray*}
			\dfrac{1}{\langle\lambda\rangle^{2d}}\displaystyle\int_{\mathbb{R}}\dfrac{\langle \xi\rangle^{2s}\langle \xi_1\rangle^{-2{\kappa} } \langle \xi_2\rangle^{-2{\kappa} }\chi_{\mathcal{D}_{1}}}{\langle\tau+2\xi_2^2-2\xi\xi_2+\xi^2\rangle^{4b-1}}d\xi_2 & \leq  & \dfrac{1}{\langle\lambda\rangle^{2d}} \displaystyle\int_{\mathbb{R}}\dfrac{\langle \xi\rangle^{2s}\langle \xi_1\rangle^{-4\kappa} \chi_{\mathcal{D}_{1}}}{\langle\tau+2\xi_2^2-2\xi\xi_2+\xi^2\rangle^{4b-1}}d\xi_2\\
			& \leq & \dfrac{c}{\langle\lambda\rangle^{2d}}\displaystyle\int_{\langle\eta \rangle\leq 4 \langle\lambda \rangle}\dfrac{\langle \xi\rangle^{2s}\langle \xi_1\rangle^{-4{\kappa}-1} }{\langle\eta \rangle^{4b-1}}d\eta.
		\end{eqnarray*}
		Now, $|\xi_2-\xi|\geq |\xi_2|-|\xi|\geq \frac{1}{2}|\xi_2|\cong \frac{1}{2}|\xi_1|$.\\
		Hence,
		$J_4\leq \langle \xi\rangle^{2s}\langle \xi_1\rangle^{-4\kappa-1}\dfrac{c}{\langle\lambda\rangle^{2d}}\displaystyle\int_{\langle\eta \rangle\leq 4\langle\lambda \rangle}\dfrac{d\eta}{\langle\eta \rangle^{4b-1}}\leq c \dfrac{\langle \xi\rangle^{2s-4\kappa-1}}{\langle\lambda\rangle^{4b+2d-2}}$.

		Estimating $J_5$:
		\begin{eqnarray*}
			J_5 &&= \dfrac{1}{\langle\lambda_2\rangle^{2b}}\displaystyle\int_{\mathbb{R}}\dfrac{\langle \xi\rangle^{2s}\langle \xi_1\rangle^{-2{\kappa} } \langle \xi_2\rangle^{-2{\kappa} }\chi_{\mathcal{B}_{2}}}{\langle\tau_2+(a-1)\xi^2-\xi_2^2+2\xi\xi_2\rangle^{2b+2d-1}}d\xi\\
			&&\leq  \dfrac{1}{\langle\lambda_2\rangle^{2b}}\displaystyle\int_{\mathbb{R}}\dfrac{\langle \xi\rangle^{2s}\langle \xi_2\rangle^{-4\kappa}\chi_{\mathcal{B}_{2}}}{\langle\tau_2+(a-1)\xi^2-\xi_2^2+2\xi\xi_2\rangle^{2b+2d-1}}d\xi.
		\end{eqnarray*}
		Setting $\eta=\tau_2+(a-1)\xi^2-\xi_2^2+2\xi\xi_2$, which give $d\eta=2(\xi_2+(a-1)\xi)d\xi$. As $0<a<\frac{1}{2},$ we have $|a-1|\leq 1$ and therefore $|\xi_2+(a-1)\xi|\geq \frac{1}{2}|\xi_2|$.

		Hence,
		\begin{eqnarray*}
			J_5 && \leq  \dfrac{c}{\langle\lambda_2\rangle^{2b}}\displaystyle\int_{\langle\eta \rangle \leq c\langle\lambda_2\rangle}\dfrac{\langle \xi\rangle^{2s}\langle \xi_2\rangle^{-4\kappa-1}}{\langle \eta\rangle^{2b+2d-1}}d\eta \\
			&&\leq \dfrac{c}{\langle\lambda_2\rangle^{2b}}\displaystyle\int_{\langle\eta \rangle\leq c\langle\lambda_2\rangle}\dfrac{\langle \xi_2\rangle^{\max\{0,2s\}-4\kappa-1}}{\langle\eta \rangle^{2b+2d-1}}d\eta \\
			&& \leq c\langle \xi_2\rangle^{\max\{0,2s\}-4\kappa-1}\langle\lambda_2\rangle^{2-4b-2d}\leq c\langle \xi_2\rangle^{\max\{0,2s\}-4\kappa-2}.
		\end{eqnarray*}
		Since $2-4b-2d<0$, when $3/8<b,d$.

		Now, we estimate $J_6$.
		Remembering that 
		\begin{eqnarray*}
			J_6 &&= \dfrac{1}{\langle\lambda_1\rangle^{2b}}\displaystyle\int_{\mathbb{R}}\dfrac{\langle \xi\rangle^{2s}\langle \xi_1\rangle^{-2{\kappa} } \langle \xi_2\rangle^{-2{\kappa} }\chi_{\mathcal{B}_{3}}}{\langle\tau_1+a\xi^2+\xi_2^2\rangle^{2b+2d-1}}d\xi_2\\
			&& \leq \dfrac{1}{\langle\lambda_1\rangle^{2b}}\displaystyle\int_{\mathbb{R}}\dfrac{\langle \xi\rangle^{2s}\langle \xi_1\rangle^{-4\kappa} \chi_{\mathcal{B}_{3}}}{\langle\tau_1+a\xi^2+\xi_2^2\rangle^{2b+2d-1}}d\xi_2.
		\end{eqnarray*}
		Using $\eta=\tau_1+a\xi^2+\xi_2^2$, which give $d\eta=2\xi_2d\xi_2$. Now, 
		\begin{eqnarray*}
			|\eta| &&=|\tau_1+a\xi^2+\xi_2^2|\\
			&&=|\lambda_1+(a\xi^2+\xi_2^2-\xi_1^2)|\\
			&&\leq c |\lambda_1|.
		\end{eqnarray*}
		By using the fact that $|\xi_1|\cong |\xi_2|$, we have
		\begin{eqnarray*}
			J_6 && \leq \dfrac{c}{\langle\lambda_1\rangle^{2b}}\displaystyle\int_{\langle\eta \rangle\leq c\langle\lambda_1\rangle}\dfrac{\langle \xi\rangle^{2s}\langle \xi_1\rangle^{-4\kappa} }{|\xi_1|\langle\eta \rangle^{2b+2d-1}}d\xi_2\\
			&&\leq  c\langle \xi_1\rangle^{\max\{0,2s\}-4\kappa-1}\langle\lambda_1\rangle^{2-2d-4b}\\
			&& \leq c \langle \xi_1\rangle^{\max\{0,2s\}-4\kappa-2}.
		\end{eqnarray*}

	\item $a=\frac12$ $(\sigma=2)$ e $0 \leq s\leq \kappa $ 
	
		As in the previous case, we cannot take advantage of the dispersion relation. So let us take $\mathcal{S}_1=\mathbb{R}^4$ and $\mathcal{S}_2=\mathcal{S}_3=\varnothing$. Note that it is enough to estimate $J_4$. Initially assume that $\kappa \geq 0$, so we get 
		$\langle \xi_1 \rangle^{-2{\kappa} } \langle \xi_2 \rangle^{-2{\kappa} }\leq \langle \xi \rangle^{-2{\kappa} }$:
		
		\begin{equation*}
			J_4 \leq \dfrac{1}{\langle\lambda\rangle^{2d}}\displaystyle\int_{\mathbb{R}}\dfrac{\langle \xi\rangle^{2s-2{\kappa} } \chi_{\mathcal{S}_1}}{\langle\tau+2\xi_2^2-2\xi\xi_2+\xi^2\rangle^{4b-1}}d\xi_2. 
		\end{equation*}
		Finally, since $s\leq \kappa$, $b>3/8$ and $d>0$, we conclude that $J_4$ is bounded.
\end{itemize}	
	
\end{proof}
\section{Proof of Theorem \ref{teo1}}\label{6}

Let $a>0$ fix and $( \kappa,s)$ satisfying the hypothesis of theorem.  Choose $b=b(\kappa, s)<\frac{1}{2}$ such that the nonlinear estimates of Lemmas \ref{l1} and \ref{l2} are valid.   Let $\tilde{u}_{0}, \tilde{v}_{0}, \tilde{f}$ and $\tilde{g}$ be extensions of $u_{0}, v_{0}, f$ and $g$ in all line $\R$ such that 
\[
\begin{array}{l}
\left\|\tilde{u}_{0}\right\|_{H^{\kappa}(\mathrm{R})} \leq c\left\|u_{0}\right\|_{H^{\kappa}\left(\mathbb{R}^{+}\right)},\left\|\tilde{v}_{0}\right\|_{H^{s}(\mathbb{R})} \leq c\left\|v_{0}\right\|_{H^{s}\left(\mathbb{R}^{+}\right)},\\
\|\tilde{f}\|_{H^{\frac{2 \kappa+1}{4}}(\mathbb{R})} \leq c\|f\|_{H^{\frac{2 \kappa+1}{4}}\left(\mathbb{R}^{+}\right)} \text {and }\ \|\tilde{g}\|_{H^{\frac{2s+1}{4}(\mathbb{R})}} \leq c\|g\|_{H} \frac{2s+1}{4}\left(\mathbb{R}^{+}\right).
\end{array}
\]
Using \eqref{2.2} and \eqref{forc} we need to obtain a fixed point, in appropriate functional space $Z(\kappa,s)$,  for the operator $\Lambda=\left(\Lambda_{1}, \Lambda_{2}\right),$ given by
\[
\begin{aligned}
\Lambda_{1}(u, v)=& \psi(t) U_1(t) \tilde{u}_{0}(x)+\psi(t) \mathcal{S}\left(\alpha \psi_{T} \overline{u} v\right)(x, t) +\psi(t) e^{-i \frac{\lambda_{1} \pi}{4}} \mathcal{L}^{\lambda_{1}} h_{1}(x, t) \text { and } \\
\Lambda_{2}(u, v)=& \psi(t) U_a(t) \tilde{v}_{0}(x)+\psi(t) \mathcal{S}_a\left(\gamma \psi_{T}\left(u^{2}\right)\right)(x, t) +\psi(t) \frac{ e^{-i \frac{\lambda_{1} \pi}{4}}}{\sqrt a} \mathcal{L}_{a}^{\lambda_{2}} h_{2}(x, t),
\end{aligned}
\]

where
\[
h_{1}(t)=\left.\left[\psi(t) \tilde{f}(t)-\left.\psi(t) e^{i t \partial_{x}^{2}} \tilde{u}_{0}\right|_{x=0}-\psi(t) \mathcal{S}\left( \psi_{T}\overline{u}v \right)(0, t)\right]\right|_{(0,+\infty)}
\]
and
\[
h_{2}(t)=\left.\left[\psi(t) \tilde{g}(t)-\left.\psi(t) e^{i at \partial_{x}^{2}} \tilde{v}_{0}\right|_{x=0}-\psi(t) \mathcal{S}_a\left( \psi_{T}u^{2} \right)(0, t)\right]\right|_{(0,+\infty)},
\]
where $\lambda_{1}=\lambda_{1}(s)$ and $\lambda_{2}=\lambda_{2}(\kappa)$ are fixed numbers satisfying 
\begin{equation}\label{ks}
\begin{split}&\max\left\{\kappa-\frac12,-1\right\}<\lambda_{1}<\min \left\{\frac{1}{2}, \kappa+\frac{1}{2}\right\}\ \text{and}\\
&\max\left\{s-\frac12,-1\right\}<\lambda_{2}<\min \left\{\frac{1}{2}, s+\frac{1}{2}\right\}.
\end{split}
\end{equation} Then, Lemmas \ref{trace} and \ref{estimate} are to be valid. Observe that as the indexes $\kappa$ and $s$ are contained in the interval $(-\frac23,1)$, then the choices for $\lambda_1$ and $\lambda_2$ satisfying \eqref{ks} is possible.

We consider $\Lambda$ in the Banach space $Z=Z(\kappa , s)=Z_{1} \times Z_{2},$ where
\[
\begin{array}{l}
Z_{1}=\mathcal{C}\left(\mathbb{R}_{t} ; H^{\kappa}\left(\mathbb{R}_{x}\right)\right) \cap \mathcal{C}\left(\mathbb{R}_{x} ; H^{\frac{2\kappa+1}{4}}\left(\mathbb{R}_{t}\right)\right) \cap X^{\kappa, b} 
\\
\quad \quad\quad \quad \quad \quad\text{and}
\\
Z_{2}=\mathcal{C}\left(\mathbb{R}_{t} ; H^{s}\left(\mathbb{R}_{x}\right)\right) \cap \mathcal{C}\left(\mathbb{R}_{x} ; H^{\frac{2s+1}{4}}\left(\mathbb{R}_{t}\right)\right) \cap X_a^{s, b}. 
\end{array}
\]

By using the estimates obtained on Lemmas \ref{l2.1}, \ref{estimate}, \ref{l1}, \ref{l2}, \ref{lw1} and \ref{lw2} we can obtain
\begin{equation}\label{c1}
\left\|\Lambda_{1}(u, v)\right\|_{Z_{1}} \leq  c\left(\left\|u_{0}\right\|_{H^{\kappa}\left(\mathbb{R}^{+}\right)}+\|f\|_{H^{\frac{2 \kappa+1}{4}}\left(\mathbb{R}^{+}\right)}+T^{\epsilon}\|u\|_{X^{\kappa, b}}\|v\|_{X_a^{s, b} }\right)
\end{equation}
and
\begin{equation}\label{c2}
\left\|\Lambda_{2}(u, v)\right\|_{Z_{2}} \leq  c\left(\left\|v_{0}\right\|_{H^{s}\left(\mathbb{R}^{+}\right)}+\|g\|_{H^{\frac{2 s+1}{4}}\left(\mathbb{R}^{+}\right)}+T^{\epsilon}\|u\|^2_{X^{\kappa, b}} \right).
\end{equation}
Similarly we have
\begin{equation}\label{c3}
\begin{aligned}
\left\|\Lambda\left(u_{1}, v_{1}\right)-\Lambda\left(u_{2}, v_{2}\right)\right\|_{Z} \leq & c T^{\epsilon}\left\{\left\|v_{1}\right\|_{X_a^{s, b}}\left\|u_{1}-u_{2}\right\|_{X^{\kappa, b}}+\left\|u_{2}\right\|_{X_a^{\kappa, b}}\left\|v_{1}-v_{2}\right\|_{X^{s, b}}\right.\\
&+\left(\left\|u_{1}\right\|_{X^{\kappa, b}}+\left\|u_{2}\right\|_{X^{\kappa, b}}\right)\left\|u_{1}-u_{2}\right\|_{X^{\kappa, b}}\}.
\end{aligned}
\end{equation}

Set the ball of $Z:$
$$
B=\left\{(u, v) \in Z ;\|u\|_{Z_{1}} \leq M_{1},\|v\|_{Z_{2}} \leq M_{2}\right\}
$$
where $M_{1}=2 c\left(\left\|u_{0}\right\|_{H^{s}\left(\mathbb{R}^{+}\right)}+\|f\|_{H^{\frac{2 s+1}{4}}\left(\mathbb{R}^{+}\right)}\right)$ and $M_{2}=2 c\left(\left\|v_{0}\right\|_{H^{\kappa}\left(\mathbb{R}^{+}\right)}+\right.$
$\left.\|g\|_{H^{\frac{\kappa+1}{3}}\left(\mathbb{R}^{+}\right)}\right)$.

Restricting $(u, v)$ on the ball $B,$ we have from \eqref{c1} and \eqref{c2}
and  choosing $T=T\left(M_{1}, M_{2}\right)$ small enough, we get
$$
\left\|\Lambda_{1}(u, v)\right\|_{Z_{1}} \leq M_{1}, \quad\left\|\Lambda_{2}(u, v)\right\|_{Z_{2}} \leq M_{2}
$$
and
$$
\left\|\Lambda\left(u_{1}, v_{1}\right)-\Lambda\left(u_{2}, v_{2}\right)\right\|_{Z} \leq \frac{1}{2}\left\|\left(u_{1}, v_{1}\right)-\left(u_{2}, v_{2}\right)\right\|_{Z}
$$

Thus $\Lambda$ defines a contraction map in $Z \cap B$ and we obtain a fixed point in $(u, v)$ in
$B .$ Therefore,
$$
(u, v):=\left(\left.u\right|_{(x, t) \in \mathbb{R}+\times(0, T)},\left.v\right|_{(x, t) \in \mathbb{R}^{+} \times(0, T)}\right).
$$
solves the IBVP \eqref{system} in the sense of distributions.

\section{Proof of Theorem Proof of Theorem \ref{cor} }\label{7}

By using a regularization argument as done in the appendix of the work \cite{CC} the identity \eqref{mass} does work for the solution $(u,v)\in C([0,T^*]:L^2(\R^+)\times L^2(\R^+))$ associated to the initial data $u_0\in L^2(\R^+)$ and $v_0\in L^2(\R^+)$ and the homogeneous boundary data $(f=g\equiv0)$. Then we have that 
$$\|u(t)\|^2_{L^2_x(\R^+)}+\|v(t)\|^2_{L^2_x(\R^+)}=\|u_0\|^2_{L^2_x}+\|v_0\|^2_{L^2_x}.$$

Then we can extended the solution for any time $T>0$.

\section*{Appendix: Proof of Lemmma \ref{lw1}}

Initially we assume $0<a<\frac12$:	

\textbf{Sub-case $\kappa\geq0$:} By using Lemma \ref{l1} it sufficies to consider the case $
|\tau|>10|\xi|^{2},$ which implies that $\left\langle\tau+\xi^{2}\right\rangle \sim\langle\tau\rangle
$. Thus, arguing as in the proof of Lemma \ref{l1} we need to show that the function
\begin{equation}
J(\xi,\tau)= \dfrac{\chi_{\left\{|\tau|>10|\xi|^{2}\right\}}}{\langle\tau+\xi^2\rangle^{2d-\kappa}}\displaystyle\int_{\mathbb{R}}\dfrac{d\xi_2}{\langle \xi_1\rangle^{2\kappa} \langle \xi_2\rangle^{2s}\langle\tau-(a-1)\xi_2^2-2\xi\xi_2+\xi^2\rangle^{{4b-1}}}.
\end{equation} 

If $0\leq \kappa\leq 2d$, we control $J(\xi,\tau)$ by
\begin{equation}
\int_{\mathbb{R}}\dfrac{d\xi_2}{ \langle \xi_2\rangle^{2s}\langle\tau-(a-1)\xi_2^2-2\xi\xi_2+\xi^2\rangle^{{4b-1}}}.
\end{equation}
We split this integral in two regions: $\xi_2\leq 1$ and  $\xi_2>1$. The first one is   easily bounded. The second one is controlled by 
\begin{equation}
c\int_{|\xi_2|>1}\dfrac{d\xi_2}{ | \xi_2|^{2s}\langle\tau-(a-1)\xi_2^{2}-2\xi\xi_2+\xi^2\rangle^{{4b-1}}}.
\end{equation}
This integral is controlled by 

\begin{equation}
c\int_{|\xi_2|>1}\dfrac{d\xi_2}{ \left\langle \frac{1}{|\xi_2|^{-\frac{2s}{4b-1}}}(\tau-(a-1)\xi_2^{2}-2\xi\xi_2+\xi^2)\right\rangle^{{4b-1}}}.
\end{equation} 
Now, this integral is bounded since $2(4b-1)+2s>1$. Thus this integral is controlled for 
$s\geq -\frac12.$ and a adequately $b=b(s)<\frac12$.

\textbf{$\kappa\leq-\frac12$:} In this situation we can assume $|\xi|^2\geq  10 |\tau|$, then $\langle \tau-\xi^2\rangle\sim \langle \xi \rangle^2$. Thus we need to control the following functions
\begin{equation}
J_1(\xi,\tau)=\frac{\langle\tau\rangle^{ \kappa}}{\langle\omega\rangle^{2 d}} \int_{\mathbb{R}^{2}} \frac{\left\langle\xi_{1}\right\rangle^{-2 \kappa}\left\langle\xi_{2}\right\rangle^{-2 s} \chi_{\mathcal{R}_{1}}}{\left\langle\omega_{1}\right\rangle^{2 b}\left\langle\tau_{2}+a \xi_{2}^{2}\right\rangle^{2 b}} d \xi_{2} d \tau_{2},
\end{equation}
\begin{equation}
J_2(\xi_1,\tau_1)=\frac{\left\langle\xi_{2}\right\rangle^{2 s}}{\left\langle\omega_{2}\right\rangle^{2 b}} \int_{\mathbb{R}^{2}} \frac{\left\langle\xi_{1}\right\rangle^{-2 \kappa}\langle\tau\rangle^{ \kappa} \chi_{\mathcal{R}_{2}}}{\left\langle\omega_{1}\right\rangle^{2 b}\langle\omega\rangle^{2 d}} d \xi d \tau
\end{equation}
\begin{equation}
J_3(\xi_1,\tau_1)= \frac{\left\langle\xi_{1}\right\rangle^{-2 \kappa}}{\left\langle\tau_1-\xi_1^2\right\rangle^{2 b}} \int_{\mathbb{R}^{2}} \frac{\langle\tau\rangle^{ \kappa}\left\langle\xi_{2}\right\rangle^{-2 s} \chi_{\mathcal{R}_{3}}}{\langle\xi\rangle^{4 d}\left\langle\tau_2+a\xi_2^2\right\rangle^{2 b}} d \xi_{2} d \tau_{2}.
\end{equation} 
Where
\begin{equation}
\mathcal{R}_1= \bigg\lbrace |\xi_2|\geq 1, |\omega|=\max \{|\omega|,|\omega_1|, |\omega_2|\}\bigg\rbrace\cup\bigg\{|\xi_2| \leq 1\bigg\}\subset \mathbb{R}^4_{\xi,\tau,\xi_2,\tau_2};
\end{equation}

\begin{equation}
\mathcal{R}_2= \bigg\{ |\xi_2|\geq 1, |\omega_1|=\max \{|\omega|,|\omega_1|, |\omega_2|\}\bigg\}\subset \mathbb{R}^4_{\xi,\tau,\xi_2,\tau_2}
\end{equation}
and
\begin{equation}
\mathcal{R}_3= \bigg\{ |\xi_2|\geq 1, |\tau_2+a\xi^2|=\max \{|\omega|,|\omega_1|, |\omega_2|\}\bigg\}\subset \mathbb{R}^4_{\xi,\tau,\xi_2,\tau_2}.
\end{equation}

It follows that
$$3\max \{|\omega|,|\omega_1|, |\omega_2|\}\geq (1-2a)\max\{\xi^2,\xi_1^2\}\geq \frac{1-2a}{4} \xi_2^2.$$

By using that $w:=\tau-\xi^2\cong|\xi|^2$, we control $J_1$ by
\begin{equation}
\frac{\langle\tau\rangle^{ \kappa}}{\langle\xi\rangle^{4 d}} \int_{\mathbb{R}^{2}} \frac{\left\langle\xi_{1}\right\rangle^{-2 \kappa}\left\langle\xi_{2}\right\rangle^{-2 s} \chi_{\mathcal{R}_{1}}}{\left\langle\omega_{1}\right\rangle^{2 b}\left\langle\tau_{2}+a \xi_{2}^{2}\right\rangle^{2 b}} d \xi_{2} d \tau_{2}.
\end{equation} 
It follows that this integral is controlled as in the estimate of \eqref{j1} in the proof of Lemma \ref{l1}.

Now we bound $J_2$ by
\begin{equation}\label{03121}
\frac{\left\langle\xi_{2}\right\rangle^{2 s}}{\left\langle\omega_{2}\right\rangle^{2 b}} \int_{\mathbb{R}^{2}} \frac{\left\langle\xi_{1}\right\rangle^{-2 \kappa}\langle\tau\rangle^{ \kappa} \chi_{\mathcal{R}_{2}}}{\left\langle\omega_{1}\right\rangle^{2 b}\langle\xi\rangle^{4 d}} d \xi d \tau.
\end{equation} 
Then using the definition of $\mathcal{R}_{2}$ we have that
\begin{equation}\label{03122}\frac{\langle\xi_2\rangle^{2s}\langle\xi_1\rangle^{-2\kappa}}{\langle\xi\rangle^{4d} \langle w_2\rangle^{2b}}\leq c \frac{\langle\xi_2\rangle^{2s}\langle\xi_1\rangle^{-2\kappa}}{\langle\xi\rangle^{4d}\langle \xi_2\rangle^{4b}}=c \frac{\langle\xi_2\rangle^{2s}\langle\xi\rangle^{2k}\langle\xi_1\rangle^{-2\kappa}}{\langle\xi\rangle^{4d+2k}\langle \xi_2\rangle^{4b}}\leq c  \frac{\langle\xi_2\rangle^{2s-4b+2|\kappa|}}{\langle\xi\rangle^{4d+2k}}. \end{equation}

Then if $s-|\kappa|\leq 4b$ we control \eqref{03121} by
\begin{equation}\label{03123}
\int_{\mathbb{R}^{2}} \frac{\langle\tau\rangle^{ \kappa} \chi_{\mathcal{R}_{2}}}{\left\langle\omega_{1}\right\rangle^{2 b}\langle\xi\rangle^{4 d+k}} d \xi d \tau.
\end{equation}
Now, by assuming $2b-\kappa>1$ we use Lemma \ref{lemagtv} to bound \eqref{03123} by
\begin{equation}\label{03124}
\int_{\mathbb{R}^{2}} \frac{c}{\left\langle\omega_{1}-\tau\right\rangle^{2 b-\kappa-1}\langle\xi\rangle^{4 d+k}} d \xi .
\end{equation}
Finally, this last integral is finite if $4d+\kappa >1$. The estimate of $J_3$ follows the same ideas of the  $J_2$ estimate. 

The case $a>\frac12$ follows the same ideas of the case $a<\frac12$ with the decomposition of the proof of Lemma \ref{l2}. Finally, the case $a=\frac12$ is similar of Lemma \ref{l1} case $a=\frac12$, since the relation dispersion it is not need to treat this estimates, since we have assume that $\kappa\leq 2d$ and $s,\ \kappa\geq0$.


	
	\bibliographystyle{abbrv}
	\addcontentsline{toc}{section}{References}
\end{document}